\newcommand{\ahat}{\hat{a}}
\newcommand{\bhat}{\hat{b}}
\newtheorem{thm}{Theorem}[section]
\newtheorem{cor}[thm]{Corollary}
\newtheorem{lemma}[thm]{Lemma}
\newtheorem{prop}[thm]{Proposition}
\newtheorem{note}[thm]{Notations}
\theoremstyle{definition}
\newtheorem{defn}[thm]{Definition}
\newtheorem{example}[thm]{Example}
\newtheorem{rmk}[thm]{Remark}
\newcommand{\N}{\mathbb{N}}
\newcommand{\R}{\mathbb{R}}
\newcommand{\cN}{\mathcal{N}}
\newcommand{\bdy}{\partial}
\renewcommand{\geq}{\geqslant}
\renewcommand{\leq}{\leqslant}
\newcommand{\Qhat}{\hat{Q}}
\theoremstyle{remark}
\DeclareMathOperator{\dom}{dom}
\DeclareMathOperator{\Int}{Int}
\DeclareMathOperator{\conv}{conv}
\DeclareMathOperator{\PLC}{PLC}
\DeclareMathOperator{\acc}{Acc} % points of accumulation
\newcommand{\oball}[2]{B_{#1}(#2)} % open ball
\renewcommand{\|}{\mid}
\newcommand{\<}{\langle}
\renewcommand{\>}{\rangle}
\newcommand{\df}[1]{\textit{\textbf{#1}}}
\renewcommand{\phi}{\varphi}
\newcommand{\ainf}{\hat{a}_\infty}
\newcommand{\binf}{\hat{b}_\infty}
\newcommand{\yinf}{\hat{y}_\infty}
\newcommand{\du}{\biguplus} %% disjoint union
\newcommand{\ainfty}{\hat{a}_\infty}
\newcommand{\binfty}{\hat{b}_\infty}
\title{Neighbourhoods in root systems of infinite Coxeter groups}
\author{Yuhan Cai}
\address{College of Engineering, Peking University, China}
\email{caiyuhuan@pku.edu.cn}
\author{Xiang Fu}
\address{Beijing International Center for Mathematical Research, Peking University, China}
\email{fuxiang@math.pku.edu.cn}
\author{Lawrence Reeves}
\address{School of Mathematics and Statistics, University of Melbourne, Australia}
\email{lreeves@unimelb.edu.au}
\begin{document}

\begin{abstract}
Let $W$ be a finitely generated infinite Coxeter group, with $\Phi$ and $\Pi$ being the 
corresponding root system and set of simple roots respectively. It has been observed in 
\cite{HLR11} that the projections of elements of $\Phi$ onto suitably chosen hyperplanes, 
called \emph{normalized roots},  are contained in the convex hull of $\Pi$ (which is a compact set), 
and hence the set of all normalized roots may exhibit interesting asymptotical behaviours. In this paper
we investigate the topology of the limit set of the normalized roots and demonstrate a natural system of neighbourhoods around each 
limit point arising from a non-affine infinite dihedral reflection subgroup of $W$. 
\end{abstract}

\maketitle

%
%\today % I think it's better if we add dates manually

\section{Introduction}
Coxeter groups are abstract groups generated by involutions subject only to a set of braid relations. Such groups are important mathematical objects arising in a multitudes of areas in mathematics and physics. There has been a rich theory for finite Coxeter groups, but comparatively, the theory of infinite Coxeter groups is not as developed. Existing classification
of general infinite Coxeter groups only covers a few infinite families of Coxeter groups. Given the abstract nature of 
a Coxeter group, one common approach to study them is to study the reflection group representations of Coxeter groups, where 
we study an isomorphic copy of an abstract Coxeter group which is generated by reflections with respect to certain hyperplanes in a real vector space. This approach affords us with a powerful tool, called the \emph{root system} (consisting of all representative normal vectors with respect to the reflecting hyperplanes and their conjugates). It is hoped that a deeper understanding of root systems for general infinite Coxeter groups may extend our knowledge of general infinite Coxeter groups. 

It is well-known that a Coxeter group is infinite if and only if its root system is infinite, and it is also well-known that the root system for any Coxeter group, finite or infinite, is discrete. It has recently been observed that the projections of elements in the root system of finitely generated infinite Coxeter groups onto suitably chosen hyperplanes may have non-trivial accumulation sets. Indeed,    
the study of such asymptotical behaviours associated to an infinite Coxeter group was initiated by Hohlweg et al in \cite{HLR11} and further developed in 
\cite{DHR13} and \cite{limit3}. 
We investigate the topology of the limit set of normalized roots and demonstrate a natural system of neighbourhoods around each limit point arising from an infinite dihedral reflection subgroup.

% In section 2 we ....

\section{Preliminaries}

The basic object we consider is a root system as specified by a \emph{Coxeter datum} 
 consisting of a real vector space equipped with a symmetric bilinear form and a collection of simple roots (see Definition \ref{def:datum}). The simple roots are not assumed to be linearly independent, but merely positively independent. 
 This is natural from the point of view of subsystems.
Similar definitions of `root basis' or `based root system' 
are contained in \cite{C06}, \cite{K},  \cite{HLR11} and \cite{DHR13}.
We recall the definition of the associated Coxeter group as well as the dominance partial order on the set of roots.

%\subsection{Coxeter data}
\begin{defn}
Let $V$ be a real vector space and $A\subset V$.
The \df{positive linear cone} of $A$ is
$$\PLC(A)=\{\Sigma_{i=1}^n \lambda_i a_i\| n\in \N, \lambda_i\geq0,\ \lambda_i\neq 0 \text{ for some $i$ } \}$$
A subset $A$ of $V$ is called \df{positively independent} if $0\notin\PLC(A)$.

\end{defn}

\begin{defn}%\textup{(Krammer \cite{K})}
 \label{def:datum}
 A \df{Coxeter datum} is a triple $\mathscr{C}=(V,  \Pi, B)$, where
 $V$ is a real vector space, $B$ is a symmetric bilinear form on $V$ and  $\Pi$  is subset of $V$ 
 such that the following conditions are satisfied:
 \begin{enumerate}
 \item[(C1)] $\Pi$ is {positively independent};
\item[(C2)] 
\begin{enumerate}
\item for all $a\in \Pi$, $B(a, a)=1$ (and we define $m_{aa}=1$)
\item for distinct $a,b\in\Pi$, either
$B(a, b)=-\cos(\pi/m_{ab})$ for some integer $m_{ab}\geq 2$
or else $B(a, b) \leq -1$ (in which case we define $m_{ab}=\infty$).
\end{enumerate}
\end{enumerate}
The elements $m_{ab}\in\N\cap\{\infty\}$ are determined by the Coxeter datum.
The set $\Pi$ of the Coxeter datum is called a \df{root basis}.
\end{defn}

Notice that   (C2) implies that  $a\notin \PLC(\Pi\setminus\{a\})$ for all   $a\in \Pi$. Furthermore, (C1) together with (C2) yield that
$\{a, b\}$ is linearly independent for distinct $a, b\in \Pi$.

%\subsection{Associated Coxeter system}
Given a Coxeter datum $\mathscr{C}=(V,\Pi,B)$,  let $(W, R)$ be the associated Coxeter system in the sense of \cite{NB} or \cite{H}.
That is, $R=\{\,r_a\mid a\in \Pi\,\}$ is a set in bijection with $\Pi$, and $W$ is the group defined by the following presentation:
$$\< R\| (r_ar_b)^{m_{ab}}=1 \quad\text{for all $a,b\in\Pi$ with $m_{ab}<\infty$} \>
$$
As with the usual geometric representation of a Coxeter group, %, Lemma~\ref{pp: anu1} can be used to show that
there is a faithful action of $W$ on $V$ determined by setting
$$
r_a(v)=v-2B(v,a)a
$$
The action of 
$W$ on $V$   preserves $B$.
We call $(W, R)$ the \df{abstract Coxeter system} associated to the Coxeter datum $\mathscr{C}$. 

%\subsection{Root systems}
\begin{defn}
\label{def: root system}
Let $\mathscr{C}=(V,\Pi,B)$ be a Coxeter datum, and let $(W, R)$ be the associated abstract Coxeter system. 
The \df{root system} of $W$ in $V$ is the set 
$$\Phi_{\mathscr{C}}=\{\,wa \mid \text{$w\in W$ and $a\in \Pi$}\,\}\subset V$$
The set $\Phi^+_{\mathscr{C}}=\Phi_{\mathscr{C}}\cap \PLC(\Pi)$ is called the set of \df{positive roots} and the set 
$\Phi^-_{\mathscr{C}}=-\Phi^+_{\mathscr{C}}$ is called  the set of \df{negative roots}.
When the Coxeter datum is fixed, we will often write $\Phi$ rather than $\Phi_{\mathscr{C}}$.
\end{defn}

The following combines results from \cite{RB96}.
\begin{prop} %\textup{(\cite[Lecture 3]{RB96})}
\label{pp: anu3}
Let $\mathscr{C}=(V,\Pi,B)$ be a Coxeter datum, and let $(W, R)$ be the associated abstract Coxeter system.
\begin{enumerate}
\item Let $w\in W$ and $a\in \Pi$. Then 
 \begin{equation*}
\ell(wr_a) =
\begin{cases}
\ell(w)-1,  &\text{if}\quad wa\in \Phi^-\\
\ell(w)+1,  &\text{if}\quad wa\in \Phi^+
\end{cases}
\end{equation*}

\item $\Phi=\Phi^+\du\Phi^-_{\mathscr{C}}$, where $\du$ denotes disjoint union.

\item $W$ is finite if and only if $\Phi$ is finite.\qed
\end{enumerate}
\end{prop}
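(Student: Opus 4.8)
The plan is to establish (b), derive (a) along with it, and deduce (c) from (a); it is convenient to organise (a) and (b) around a single implication.

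I would begin with (b). The disjointness $\Phi^+\cap\Phi^-=\emptyset$ is immediate from (C1): if $\beta$ and $-\beta$ both lay in $\PLC(\Pi)$, then adding a non-negative, not-all-zero expansion of $\beta$ to one of $-\beta$ would exhibit $0\in\PLC(\Pi)$, against positive independence. For the rest, first record the sign homomorphism $\varepsilon\colon W\to\{1,-1\}$ with $\varepsilon(r_a)=-1$; it is well defined because each defining relator $(r_ar_b)^{m_{ab}}$ is sent to $1$, so $\ell$ has a well-defined parity on $W$, and combined with the triangle inequality $|\ell(wr_a)-\ell(w)|\leq 1$ this forces $\ell(wr_a)\in\{\ell(w)-1,\,\ell(w)+1\}$ for all $w\in W$, $a\in\Pi$. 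Given this, it suffices to prove the single claim
\begin{equation}\label{eq:starprop}
\ell(wr_a)>\ell(w)\ \Longrightarrow\ wa\in\Phi^+\qquad(w\in W,\ a\in\Pi).
\end{equation}
Indeed, granting \eqref{eq:starprop}: if $\ell(wr_a)<\ell(w)$, put $v=wr_a$, so $vr_a=w$ and hence $\ell(vr_a)>\ell(v)$; then \eqref{eq:starprop} yields $va\in\Phi^+$, and since $va=wr_a(a)=-wa$ we get $wa\in\Phi^-$. Thus every root lies in $\Phi^+\cup\Phi^-$, which is (b), and the sign of $\ell(wr_a)-\ell(w)$ records whether $wa$ is positive or negative, which is (a).

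The crux is \eqref{eq:starprop}, which I would prove by induction on $\ell(w)$, the base case $w=1$ being clear. For the step write $w=r_bw'$ with $b\in\Pi$ and $\ell(w')=\ell(w)-1$. Then $\ell(w'r_a)>\ell(w')$ --- otherwise $\ell(w'r_a)\leq\ell(w')-1$ and so $\ell(wr_a)\leq\ell(w')=\ell(w)-1$, against the hypothesis --- so by induction $w'a\in\Phi^+$; moreover $w'a\neq b$, since otherwise $w'r_a(w')^{-1}=r_{w'a}=r_b$ gives $w=r_bw'=w'r_a$, whence $\ell(wr_a)=\ell(w')<\ell(w)$, again a contradiction. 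Hence $w'a\in\Phi^+\setminus\{b\}$, and it remains to know that $r_b$ carries $\Phi^+\setminus\{b\}$ into $\Phi^+$, for then $wa=r_b(w'a)\in\Phi^+$. This permutation property is the substantive geometric input and is mildly entangled with (b): a positive root $\gamma\neq b$ is not a scalar multiple of $b$ (since $B(\gamma,\gamma)=1$), so some non-negative expansion $\gamma=\sum_{c\in\Pi}\lambda_cc$ has a positive coefficient on some $c\neq b$; were $r_b\gamma$ negative, then $-r_b\gamma=2B(\gamma,b)b-\gamma$ would be a non-negative combination of $\Pi$, and comparing it with that expansion of $\gamma$ forces either $0\in\PLC(\Pi)$ (impossible by (C1)) or $b\in\PLC(\Pi\setminus\{b\})$ (impossible by the remark after Definition~\ref{def:datum}); so $r_b\gamma$ is not negative, and --- once one knows, as part of (b), that every root is positive or negative --- it is positive. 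The honest accounting is that (b), \eqref{eq:starprop} and this permutation property must be proved together by a single induction on the depth of roots, and I expect the correct synchronisation of that induction to be the main obstacle; it is precisely the bookkeeping done in \cite{RB96}. (An alternative, self-contained route to (a) and (b) runs through the contragredient action of $W$ on the dual of $V$ and the Tits cone $\{\,f\in V^*:f(a)\geq 0\text{ for all }a\in\Pi\,\}$, whose interior is non-empty exactly because $\Pi$ is positively independent.)

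Finally (c). If $\Phi$ is finite, then $W$ acts on the finite set $\Phi$ --- which is $W$-invariant, being $\{\,wa:w\in W,\ a\in\Pi\,\}$ --- and this action is faithful: if $w$ fixed every root then $wa=a\in\Phi^+$ for all $a\in\Pi$, so (a) would give $\ell(wr_a)=\ell(w)+1$ for every $a$, whereas any $w\neq 1$ has a reduced expression ending in some $r_a$ and so satisfies $\ell(wr_a)=\ell(w)-1$; thus $w=1$. Therefore $W$ embeds in the finite symmetric group on $\Phi$ and is finite. Conversely, if $W$ is finite then $\Pi$ is finite: distinct simple roots are linearly independent by the remark after Definition~\ref{def:datum}, so the corresponding reflections have distinct $(-1)$-eigenlines in $V$ and hence are distinct in $W$, so $\Pi$ injects into $W$; therefore $\Phi=\{\,wa:w\in W,\ a\in\Pi\,\}$ is a finite union of finite sets.
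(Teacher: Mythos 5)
The paper itself offers no argument for this proposition---it is imported from \cite{RB96} with a \textit{q.e.d.}---so your proposal can only be measured against the standard proofs. Your skeleton is the standard one and most of it is sound: the disjointness of $\Phi^+$ and $\Phi^-$ from (C1), the parity argument forcing $\ell(wr_a)=\ell(w)\pm 1$, the reduction of both (a) and (b) to the single implication ``$\ell(wr_a)>\ell(w)\Rightarrow wa\in\Phi^+$'', and both directions of (c) (faithfulness of the action on a finite $\Phi$, and injectivity of $a\mapsto r_a$ from linear independence of pairs of simple roots) are all correct as written.

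The genuine gap is exactly at the point you flag, and it is worth being precise about why the repair you gesture at does not close it. Your inductive step produces $wa=r_b(w'a)$ with $w'a\in\Phi^+\setminus\{b\}$, and your argument for the permutation property only shows $r_b(w'a)\notin\Phi^-$; to upgrade ``not negative'' to ``positive'' you need $wa\in\Phi^+\cup\Phi^-$. But $wa$ is a root at level $\ell(w)=n+1$ of the induction---precisely the level you are in the middle of establishing---so a combined induction on $\ell(w)$ carrying (b) along as ``$ua\in\Phi^+\cup\Phi^-$ for $\ell(u)\le n$'' cannot be invoked for it. No bookkeeping within this scheme fixes that: when $B(w'a,b)>0$ the coefficient of $b$ in $r_b(w'a)$ may go negative, so membership in $\PLC(\Pi)$ is simply not visible from the expansion, and the permutation property in the form you need (that $r_b$ carries $\Phi^+\setminus\{b\}$ \emph{into} $\PLC(\Pi)$, not merely out of $-\PLC(\Pi)$) is essentially equivalent to the theorem. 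The two standard ways to break the circle are: (i) the rank-two reduction of \cite[Theorem 5.4]{H}, where instead of peeling a generator off the left one factors $w=vu$ through a minimal-length representative of $wW_I$ for $I=\{r_a,r_b\}$ with $\ell(wr_b)<\ell(w)$, checks by an explicit dihedral computation that $ua$ is a non-negative combination of $a$ and $b$, and concludes $wa=v(ua)\in\PLC(\{va,vb\})\subseteq\PLC(\Pi)$ with $va,vb$ positive by induction on the strictly smaller $\ell(v)$; or (ii) the contragredient action on the Tits cone, which you mention only parenthetically. One of these must replace the permutation-property step; as it stands that step is the entire content of parts (a) and (b) and is not supplied.
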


The elements of $T=\bigcup_{w\in W} w Rw^{-1}\subset G$ are called \df{reflections}. 
If $x\in\Phi$ then $x=wa$ for some $w\in W$ and $a\in\Phi$ and 
$
(wr_aw^{-1})x=-x
$.
For each $t\in T$ we let $\alpha_t$ denote the unique positive root $wa$ with the property that $wr_aw^{-1}=t$. It is  easily checked that 
the map $T\to \Phi^+$ given by $t \mapsto \alpha_t$ is a bijection.

%\subsection{Dominance}

%Following \cite{BH93},  define the \df{depth} of $x\in \Phi^+$
% to be 
% $$\dpth(x)=\min\{\, \ell(w) \mid\text{$w\in W$ and $wx\in \Phi^-$}\, \}$$ 

%\begin{lemma}\textup{(\cite[Lemma 1.7 ]{BH93}).}
%\label{lem: pre}
%Let $\mathscr{C}=(V,\Pi,B)$ be a Coxeter datum, and let $(W, R)$ be the associated abstract Coxeter system.
%Let $r\in R$ and $\alpha \in \Phi^+\setminus\{\alpha_r\}$. Then
%\begin{equation*}
%\dpth(r\alpha) =
%\begin{cases}
%\dpth(\alpha)-1,  &\text{   if }\quad B(\alpha, \alpha_r)>0,\\
%\dpth(\alpha),  &\text{         if }\quad B(\alpha, \alpha_r)=0,\\
%\dpth(\alpha)+1, &\text{   if }\quad B(\alpha, \alpha_r)<0.
%\end{cases}
%\end{equation*}
%\qed
%\end{lemma}

We recall from \cite{BH93} a partial ordering on $\Phi$ which is central to the rest of this paper:
\begin{defn}
Given a Coxeter datum $\mathscr{C}=(V,\Pi,B)$, and associated $W$, for 
$x, y\in \Phi$, we say that $x$ \df{dominates} $y$ if 
$$ \{\,  w\in W\mid wx\in \Phi^-   \,\} \subseteq \{\,   w\in W\mid wy\in \Phi^-    \,\}$$
and write $x\dom y$.
\end{defn}

The following collects a number of elementary results on dominance:
\begin{lemma}[{\cite[Lemma 2.2]{BH93}}]
\label{lem:dom}
Given a Coxeter datum $\mathscr{C}=(V,\Pi,B)$, and associated $W$. Then

\begin{enumerate}
\item There is dominance between $x,y\in \Phi$ if and only if $B(x, y)\geq 1$.

%\item \label{lem:domb}
%If $x, y \in \Phi^+$. Then there is dominance between $x$ and $y$ if and only if $B(x, y)\geq 1$.

\item Dominance is $W$-invariant. %that is, if $x\dom y$ then $wx\dom wy$ for all $w\in W$.
\end{enumerate}
\qed
\end{lemma}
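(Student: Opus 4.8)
The plan is to dispose of (2) immediately and to reduce (1) to a computation in a rank-$\leq 2$ root system, by passing to the dihedral reflection subgroup $W'=\<r_x,r_y\>$. For (2), fix $v\in W$: for any root $z$, since $w(vz)\in\Phi^-\iff(wv)z\in\Phi^-$, we have $\{\,w\in W\mid w(vz)\in\Phi^-\,\}=\{\,u\in W\mid uz\in\Phi^-\,\}\,v^{-1}$; hence the inclusion defining $x\dom y$, right-multiplied by $v^{-1}$, becomes exactly the inclusion defining $vx\dom vy$, and applying this to $v$ and then to $v^{-1}$ yields $x\dom y\iff vx\dom vy$.

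For (1), the cases $x=y$ (where $B(x,x)=1$) and $x=-y$ (where $B(x,y)=-1$, while the sets $\{\,w\mid wx\in\Phi^-\,\}$ and $\{\,w\mid wy\in\Phi^-\,\}$ are disjoint, by Proposition~\ref{pp: anu3}, and nonempty, so neither contains the other and there is no dominance) are immediate, so assume $x\ne\pm y$. Then $W'=\<r_x,r_y\>$ is a dihedral reflection subgroup of $W$ whose root system $\Phi_{W'}$ contains $x$ and $y$. I would invoke two standard facts about $W'$: (a) $\Phi_{W'}^{+}:=\Phi_{W'}\cap\Phi^{+}$ is a system of positive roots for $W'$, with $\Phi_{W'}^{-}=\Phi_{W'}\cap\Phi^{-}$, so that $W'$-positivity and $W$-positivity agree on $\Phi_{W'}$; and (b) every $w\in W$ factors as $w=dw'$ with $w'\in W'$, $d$ of minimal length in $dW'$, and $d(\Phi_{W'}^{+})\subseteq\Phi^{+}$ (for dihedral $W'$ both are elementary). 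By (a), restricting the defining inclusion to $W'$ shows at once that dominance between $x$ and $y$ in $W$ implies dominance between them in $W'$; conversely, if $x$ dominates $y$ in $W'$ and $wx\in\Phi^-$, writing $w=dw'$ as in (b) forces $w'x\in\Phi_{W'}^{-}$ (else $d(w'x)\in\Phi^{+}$), hence $w'y\in\Phi_{W'}^{-}$ and therefore $wy=d(w'y)\in\Phi^{-}$. Thus the two notions of dominance coincide on $\Phi_{W'}$, and it suffices to prove the statement when the root basis has size $\leq 2$: for roots $x\ne\pm y$ of a dihedral group there is dominance iff $B(x,y)\geq 1$. I would prove this by splitting on the signature of $B$ on $\vspan\{x,y\}$: positive definite (equivalently $|B(x,y)|<1$, so $W'$ is finite) — writing the $2m$ roots of $W'$ as a regular polygon and $\Phi_{W'}^{+}$ as the arc of $m$ consecutive ones cut out by $\PLC(\Pi)$, the inversion sets of $W'$ are exactly the initial segments of that arc, so for distinct $x,y$ one of them contains $x$ but not $y$, exhibiting non-dominance; positive semidefinite ($B(x,y)=\pm1$, affine) and indefinite ($|B(x,y)|>1$, Lorentzian) — one lists the positive roots as two families converging on the isotropic directions and checks directly that two distinct roots are comparable under dominance exactly when their $B$-value is $\geq 1$, the deeper one dominating. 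A useful shortcut in the forward direction: if $x,y\in\Phi^{+}$ and $x\dom y$, applying the inclusion to $w=r_x$ alone gives $r_x(y)=y-2B(x,y)x\in\Phi^{-}$, so $2B(x,y)x=y+\bigl(2B(x,y)x-y\bigr)\in\PLC(\Pi)$; since $\PLC(\Pi)\cap(\PLC(-\Pi)\cup\{0\})=\emptyset$, this already forces $B(x,y)>0$, so in the positive case only the range $0<B(x,y)<1$ — the finite dihedral one — has to be excluded.

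I expect the main obstacle to be the reduction itself, i.e. the claim that dominance is ``detected inside'' the rank-$\leq 2$ reflection subgroup $\<r_x,r_y\>$: the descent (non-dominance in $W$ $\Rightarrow$ non-dominance in $W'$) is immediate once positive systems are matched, but the ascent (dominance in $W'$ $\Rightarrow$ dominance in $W$) genuinely needs fact (b), the minimal-length coset representatives with $d(\Phi_{W'}^{+})\subseteq\Phi^{+}$. The remaining work is routine: bookkeeping over the possible signs of the two roots — $x,y\in\Phi^{+}$; $x,y\in\Phi^{-}$, which merely swaps $x$ and $y$; and $x\in\Phi^{+}$, $y\in\Phi^{-}$, handled directly by applying a longest element (in the finite dihedral case) or a single reflection (otherwise) of $W'$ — together with the explicit, elementary rank-$\leq 2$ root-system computation.
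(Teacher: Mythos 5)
The paper does not actually prove this lemma --- it is imported verbatim from Brink--Howlett \cite[Lemma 2.2]{BH93} and stamped with a \qed --- so there is no in-paper argument to compare against; your proposal should be judged as a self-contained reconstruction, and as such it is sound and follows essentially the classical strategy. Part (2) via right translation of the sets $\{w\mid wz\in\Phi^-\}$ is correct and complete. For part (1), the degenerate cases $x=\pm y$ are handled correctly (for $x=-y$ the two sets are complementary and both nonempty, so neither contains the other), and the reduction of the case $x\neq\pm y$ to the dihedral subgroup $W'=\<r_x,r_y\>$ is the right move: you correctly identify that the descent direction needs only the compatibility of positive systems, while the ascent direction genuinely requires the minimal coset representatives $d$ with $d(\Phi_{W'}^+)\subseteq\Phi^+$. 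Be aware that facts (a) and (b) are precisely the Deodhar--Dyer theory of reflection subgroups and canonical generators --- ``standard'' is fair, but ``elementary for dihedral $W'$'' somewhat undersells what is being invoked for an arbitrary (non-parabolic) pair of reflections; this is the one load-bearing external input, and it is the same input the paper leans on elsewhere (cf.\ Proposition~\ref{pp:max} and the remark on canonical generating sets in Section~4). The deferred rank-$2$ computations are routine and consistent with the explicit $c_i=\sinh(i\theta)/\sinh\theta$ calculations the paper itself performs: in the indefinite case the two families of positive roots satisfy $B\geq 1$ within a family and $B\leq -1$ across families, matching dominance exactly, and your observation that $x\dom y$ with $x,y\in\Phi^+$ forces $r_xy\in\Phi^-$ and hence $B(x,y)>0$ by positive independence of $\Pi$ is a clean way to dispose of most of the forward direction. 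In short: no gap, a genuine proof where the paper offers only a citation.
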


%\subsection{Free Coxeter data}

Given a Coxeter datum $\mathscr{C}=(V,\Pi,B)$, let $V_{L.I.}$ be a
vector space over $\R$ with basis $\Pi_{L.I.}=\{\,e_a\mid a\in \Pi\,\}$ and let $B_e$ be the unique bilinear form on $E$
satisfying 
$$
B_e(e_a, e_b) =B(a, b) \text{, for all } a, b\in \Pi.
$$
Then $\mathscr{C}_{L.I.}=(\, V_{L.I.},\, \Pi_{L.I.}, \, B_e\,)$ is again a Coxeter datum.
Moreover, $\mathscr{C}_{L.I.}$ and $\mathscr{C}$ are
associated to the same abstract Coxeter system $(W, R)$. 

\begin{defn}
\label{df: free}
The Coxeter datum $\mathscr{C}_{L.I.}=(\, V_{L.I.},\, \Pi_{L.I.}, \, B_e\,)$ constructed 
as above is call the \df{free Coxeter datum} corresponding to $\mathscr{C}$.
\end{defn}

%Corollary~\ref{co:anu2}
Standard arguments (as in \cite[Corollary 1.4]{Fu12})  yields that 
$$\phi_{\mathscr{C}_{L.I.}}\colon W\to G_{\mathscr{C}_{L.I.}}=
\langle\{\, \rho_{e_a}\mid a\in \Pi\,\}\rangle$$ is an isomorphism.
Furthermore, $W$ acts faithfully on on $V_{L.I.}$ via $r_a y =\rho_{e_a} y$ for all $a\in \Pi$ and
$y\in V_{L.I.}$.  

Let $f\colon V_{L.I.}\to V$ be the unique linear map satisfying $f(e_a)= a$, for all
$a\in \Pi$. It is readily checked that $B(f(x), f(y))=B_e(x, y)$, for all $x, y\in
V_{L.I.}$. 
Now  for all $a\in \Pi$ and $y\in V_{L.I.}$, 
\begin{align*}
 r_a(f(y))=\rho_a(f(y))=f(y)-2B(f(y), a)a &=f(y)-2B(f(y), f(e_a))f(e_a)\\
                                         &=f(y-2B(y, e_a)_e e_a)\\
                                         &=f(\rho_{e_a}y)\\
                                         &=f(r_a y).
\end{align*}
Then it follows that $wf(y)=f(wy)$, for all $w\in W$ and all $y\in V_{L.I.}$, since
$W$ is generated by $\{\,r_a\mid a\in \Pi\,\}$.
%Let $\Phi_{\mathscr{C}_E}$ denote the root system associated to the datum $\mathscr{C}_E$, 
%and let $\Phi^+_{\mathscr{C}_E}$ (respectively, $\Phi^-_{\mathscr{C}_E}$) denote the corresponding set of 
%positive roots (respectively, negative roots). Then a similar 
And a similar reasoning as that of Proposition~2.9 of \cite{HRT97} enables us to deduce: 

\begin{prop}\textup{\cite[Proposition 2.1]{Fu12}}
\label{pp: eqv}
The map $f\colon V_{L.I.}\to V$ defined in the preceding paragraph restricts to a $W$-equivariant bijection $\Phi_{\mathscr{C}_{L.I.}}\leftrightarrow \Phi_{\mathscr{C}}$.
\qed
\end{prop}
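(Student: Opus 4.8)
The plan is to verify that the linear map $f\colon V_{L.I.}\to V$ with $f(e_a)=a$ sends $\Phi_{\mathscr{C}_{L.I.}}$ bijectively onto $\Phi_{\mathscr{C}}$, using the $W$-equivariance already established in the excerpt (namely $wf(y)=f(wy)$ for all $w\in W$, $y\in V_{L.I.}$). Surjectivity and the fact that $f$ maps into $\Phi_{\mathscr{C}}$ are immediate: given $x\in\Phi_{\mathscr{C}}$ we have $x=wa$ for some $w\in W$, $a\in\Pi$, so $x=wf(e_a)=f(we_a)$ with $we_a\in\Phi_{\mathscr{C}_{L.I.}}$; conversely any element of $\Phi_{\mathscr{C}_{L.I.}}$ has the form $we_a$ and maps to $wa\in\Phi_{\mathscr{C}}$. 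So the entire content is injectivity of the restriction of $f$ to $\Phi_{\mathscr{C}_{L.I.}}$.

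For injectivity I would argue as follows. Suppose $x_1,x_2\in\Phi_{\mathscr{C}_{L.I.}}$ with $f(x_1)=f(x_2)$. Write $x_i=w_ie_{a_i}$; using $W$-equivariance we may apply $w_1^{-1}$ to both sides and reduce to the case $x_1=e_a\in\Pi_{L.I.}$, i.e. we must show that if $f(x_2)=a$ for some $a\in\Pi$ then $x_2=e_a$. The key fact needed is Proposition~\ref{pp: anu3}(2): $\Phi_{\mathscr{C}_{L.I.}}=\Phi^+_{\mathscr{C}_{L.I.}}\du\Phi^-_{\mathscr{C}_{L.I.}}$, and likewise for $\mathscr{C}$. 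Since $f$ is linear and carries $\Pi_{L.I.}$ onto $\Pi$, it maps $\PLC(\Pi_{L.I.})$ onto (indeed into and onto) $\PLC(\Pi)$; hence $f$ sends positive roots to positive roots and negative roots to negative roots. As $a\in\Phi^+_{\mathscr{C}}$, we conclude $x_2\in\Phi^+_{\mathscr{C}_{L.I.}}$, so $x_2=\sum_{b\in\Pi}\lambda_b e_b$ with all $\lambda_b\geq 0$ and $f(x_2)=\sum_b\lambda_b b=a$. By condition (C1) the set $\Pi$ is positively independent, and more precisely (as noted after Definition~\ref{def:datum}) $a\notin\PLC(\Pi\setminus\{a\})$; combined with $\{a,b\}$ being linearly independent for distinct $a,b$, the equation $\sum_b\lambda_b b=a$ with nonnegative coefficients forces $\lambda_a=1$ and $\lambda_b=0$ for $b\neq a$. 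Therefore $x_2=e_a$, giving injectivity.

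An alternative, cleaner route to injectivity uses the bijection $T\leftrightarrow\Phi^+$, $t\mapsto\alpha_t$, recorded in the excerpt, together with the fact that $\phi_{\mathscr{C}_{L.I.}}$ and $\phi_{\mathscr{C}}$ present the same abstract Coxeter system $(W,R)$: for $t\in T$ the positive roots $\alpha_t^{L.I.}\in\Phi^+_{\mathscr{C}_{L.I.}}$ and $\alpha_t^{\mathscr{C}}\in\Phi^+_{\mathscr{C}}$ are determined purely group-theoretically by the condition $w r_a w^{-1}=t$, and $W$-equivariance of $f$ shows $f(\alpha_t^{L.I.})=\alpha_t^{\mathscr{C}}$; since $t\mapsto\alpha_t$ is a bijection on both sides, $f$ restricts to a bijection $\Phi^+_{\mathscr{C}_{L.I.}}\leftrightarrow\Phi^+_{\mathscr{C}}$, and then to a bijection on all of $\Phi$ by applying the sign symmetry $x\mapsto -x$, which $f$ obviously commutes with. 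I would probably present this second argument as the main proof and relegate the elementary positive-independence computation to a remark.

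The main obstacle is essentially bookkeeping rather than any genuine difficulty: one must be careful that $f$, which need not be injective on all of $V_{L.I.}$ (the whole point of the free datum is that $\Pi$ may fail to be linearly independent), nonetheless \emph{is} injective when restricted to the root system. The delicate point is exactly the reduction using $W$-equivariance to an element of $\Pi_{L.I.}$, together with the sign dichotomy of Proposition~\ref{pp: anu3}(2), which is what pins a root down from its image; without the positivity constraint the equation $\sum\lambda_b b = a$ could have other solutions. Since all the required ingredients—$W$-equivariance, the disjoint-union decomposition, positive independence of $\Pi$, and the reflection/root bijection—are already available in the excerpt, the proof is short.
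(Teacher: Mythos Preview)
Your argument is correct. The paper does not actually prove this proposition: it is stated with a terminal \qed\ and attributed to \cite[Proposition~2.1]{Fu12}, so there is no in-paper proof to compare against. Both of your routes to injectivity are valid; the second one, via the bijection $T\leftrightarrow\Phi^+$, is indeed the cleaner of the two and matches the spirit of the reference. One small comment on the first route: in the final step, the equation $\sum_b\lambda_b b=a$ with $\lambda_b\geq 0$ is resolved by positive independence of $\Pi$ alone (split into the cases $\lambda_a<1$, $\lambda_a=1$, $\lambda_a>1$ and use $0\notin\PLC(\Pi)$ and $a\notin\PLC(\Pi\setminus\{a\})$); the pairwise linear independence of $\{a,b\}$ that you invoke is not the operative fact here, though the conclusion is unaffected.
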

%\begin{proof}
%Since $f(w e_a)=w a$ for all $w\in W$ and $a\in \Pi_{\mathscr{C}}$, it follows that 
%$f(\Phi_{\mathscr{C}_E})=\Phi_{\mathscr{C}}$. Proposition~\ref{pp:anu3} applied to $\mathscr{C}_E$ 
%yields that, $w e_a\in \Phi_{\mathscr{C}_E}^+$ if and only if 
%$\ell(w r_a)=\ell(w)+1$, and this  happens if and only if
%$w a\in \Phi^+_{\mathscr{C}}$, so $f(\Phi^+_{\mathscr{C}_E})=\Phi^+$.
%We are done if we can show that the restriction of $f$ on $\Phi^+{\mathscr{C}_E}$ is injective.
%Suppose that there are $x, y\in \Phi^+{\mathscr{C}_E}$ with $f(x)=f(y)$. Then
%$
%\phi_{\mathscr{C}}\phi_{\mathscr{C}_E}^{-1}(\rho_x)=\rho_{f(x)}=
%\rho_{f(y)}=\phi_{\mathscr{C}}\phi_{\mathscr{C}_E}^{-1}(\rho_y)
%$.
%Since $\phi_{\mathscr{C}}$ is an isomorphism, it follows that 
%$\phi_{\mathscr{C}_E}^{-1}(\rho_x)=\phi_{\mathscr{C}_E}^{-1}(\rho_y)$,  
%that is, $x$ and $y$ correspond to the same reflection in $W$. 
%Since $x, y\in \Phi^+_{\mathscr{C}_E}$, it follows that $x=y$, as required. 
%%Since $f(w e_a)=w a$ for all $w\in W$ and $a\in \Pi$, it follows that $f$ 
%%restricts to a map $f'\colon \Phi_E\to \Phi$. Further, if 
%%$w e_a\in \Phi_E^+$, then Proposition~\ref{pp:anu3} applied to the Coxeter 
%%datum $\mathscr{C}_E$ yields that
%%$\ell(w r_a)=\ell(w)+1$, which then gives $wa\in \Phi^+$. Thus 
%%$f'$ restricts to a map $f''\colon \Phi_E^+\to \Phi^+$. Observe 
%%that to prove the required result, it suffices to prove that $f''$ is bijective, 
%%and this bijectivity follows since $f''$  is compatible with the canonical 
%%bijections from $\Phi_E^+$ and $\Phi^+$ to the reflections on $W$.
%\end{proof}

\section{Limit roots}

We recall the definition of the limit roots associated to a Coxeter Datum from \cite{HLR11}.
If the root system (and therefore the associated Coxeter group) is infinite we can consider the accumulation points of the image of $\Phi$ in the projective space $PV$. In this context, it is convenient to normalize the roots to lie on a fixed affine hyperplane in $V$.

\begin{defn}\textup{(\cite{HLR11}, \cite{DHR13})}
\label{def: trans}
Given a Coxeter datum $\mathscr{C}=(V,\Pi,B)$, an affine hyperplane $V_1$ of codimension $1$ in $V$ is 
called \df{transverse} (to $\Phi^+$) if for each simple root $a\in \Pi$ the ray $\R_{>0}a$  intersects $V_1$ in
exactly one point, and this unique intersection is denoted by $\widehat{a}_{V_1}$. Given a hyperplane $V_1$ transverse to 
$\Phi^+$, let $V_0$ be the hyperplane that is parallel to $V_1$ and contains the origin.
\end{defn}

\begin{rmk}
For a Coxeter datum $\mathscr{C}$, it follows from Proposition~\ref{pp: anu3}~(b) that it is 
always possible to find a hyperplane containing the origin that separates $\Phi^+$ and $\Phi^-$.
By suitably translating this hyperplane it is always possible to find a hyperplane transverse to  $\Phi^+$.
\end{rmk}

Let $V_1$ be a transverse hyperplane and let $V_0$ be as in the preceding definition. 
Let $V_0^{+}$ be the open hall space induced by $V_0$ that contains $V_1$. Observe that 
$V_0^{+}$ contains  $\PLC(\Pi)$. Since 
$\Phi_{\mathscr{C}}^+\subset \PLC(\Pi)\subset V_0^{+}$, and $V_1$ is parallel to the boundary of 
$V_0^{+}$, it follows that $\#(V_1\cap \R_{>0} \beta) =1$ for each $\beta\in \Phi^+_{\mathscr{C}}$. 
This leads to an alternative definition of transverse hyperplanes:

\begin{lemma}\textup{\cite{HLR11}}
\label{lem: trans}
Given a Coxeter datum $\mathscr{C}=(V,\Pi,B)$, an affine hyperplane $V_1$
is transverse if and only if $\#(V_1\cap \R_{>0} \beta) =1$ for each $\beta\in \Phi^+_{\mathscr{C}}$.
\qed
\end{lemma}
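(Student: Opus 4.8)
The plan is to prove the two implications separately; the forward (``if'') direction is immediate, and the reverse (``only if'') direction is essentially the geometric observation recorded in the paragraph preceding the statement, which I would now make into a proof.

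For the ``if'' direction, suppose $\#(V_1\cap\R_{>0}\beta)=1$ for every $\beta\in\Phi^+_{\mathscr{C}}$. Since every simple root is a positive root (take $w$ to be the identity in Definition~\ref{def: root system}), we have $\Pi\subseteq\Phi^+_{\mathscr{C}}$, so the hypothesis applies in particular to each $a\in\Pi$; writing $\widehat{a}_{V_1}$ for the unique point of $V_1\cap\R_{>0}a$, this is exactly the defining property of transversality in Definition~\ref{def: trans}. Nothing further is needed here.

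For the ``only if'' direction, assume $V_1$ is transverse and let $V_0$ be the parallel hyperplane through the origin. First I would fix a linear functional $\phi\colon V\to\R$ with $\ker\phi=V_0$, normalized so that the open half-space containing $V_1$ is $V_0^+=\{v\in V\mid\phi(v)>0\}$ and $V_1=\{v\in V\mid\phi(v)=c\}$ for a constant $c$, which transversality forces to be positive (since $V_1\subseteq V_0^+$). The key step is to show $\phi(a)>0$ for every $a\in\Pi$: if $\phi(a)\le 0$, the ray $\R_{>0}a$ would lie in $\{v\mid\phi(v)\le 0\}$ and hence be disjoint from $V_1\subseteq\{v\mid\phi(v)>0\}$, contradicting transversality. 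Since $\phi$ is linear and strictly positive on $\Pi$, it is strictly positive on all of $\PLC(\Pi)$ (a nonnegative combination of simple roots with at least one nonzero coefficient), and therefore on $\Phi^+_{\mathscr{C}}\subseteq\PLC(\Pi)$. Finally, for $\beta\in\Phi^+_{\mathscr{C}}$ and $t>0$ one has $t\beta\in V_1\iff t\,\phi(\beta)=c\iff t=c/\phi(\beta)$, which has the unique positive solution $t=c/\phi(\beta)$; hence $\#(V_1\cap\R_{>0}\beta)=1$.

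The argument involves no real obstacle; the only point requiring a little care is the normalization making $V_0^+$ (equivalently $\PLC(\Pi)$) the ``correct'' side of $V_0$ and $c>0$, but this is exactly the picture already described in the discussion leading up to the statement, so it is routine.
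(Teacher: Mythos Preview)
Your proof is correct and follows essentially the same approach as the paper. The paper does not give a formal proof (the lemma is cited from \cite{HLR11} and marked with a \qed), but the paragraph immediately preceding the statement sketches exactly your ``only if'' argument: $V_1$ transverse forces $\PLC(\Pi)\subset V_0^+$, whence each positive root lies in the open half-space and its ray meets $V_1$ exactly once; your use of a linear functional $\phi$ with $\ker\phi=V_0$ is just a clean way to write this down, and the ``if'' direction via $\Pi\subseteq\Phi^+_{\mathscr{C}}$ is the obvious one.
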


\begin{defn}
\label{df: norm}
Let $\mathscr{C}=(V,\Pi,B)$ be a Coxeter datum, 
let  $V_1$ be a transverse hyperplane  in $V$, and let $V_0$ be obtained from $V_1$ as in Definition~\ref{def: trans}. 
%For each $v\in V\setminus V_0$, observe that the line $\R v$ intersects $V_1$ in exactly one point. 
\begin{enumerate}
%\item For each $v\in V\setminus V_0$,  the unique intersection point of $\R v$ 
%and the transverse hyperplane $V_1$  is called the \df{normalized root} and denoted $\widehat{v}$. The 
%\df{normalization map} is $\pi_{V_1}\colon V\setminus V_0\to V_1$, $\pi_{V_1}(v)=\widehat{v}$.
%Set $\widehat{\Phi}_{\mathscr{C}{V_1}}=\pi_{V_1}(\Phi_{\mathscr{C}})$.

\item For each $v\in V\setminus V_0$,  the unique intersection point of $\R v$ 
and the transverse hyperplane $V_1$  is denoted $\widehat{v}$. The 
\df{normalization map} is $\pi_{V_1}\colon V\setminus V_0\to V_1$, $\pi_{V_1}(v)=\widehat{v}$.
Set $\widehat{\Phi}_{\mathscr{C}{V_1}}=\pi_{V_1}(\Phi_{\mathscr{C}})$, and the elements 
$\widehat{x}\in\widehat{\Phi}_{\mathscr{C}{V_1}}$ are called \df{normalized roots}.

\item For $\widehat{x}\in \widehat{\Phi}_{\mathscr{C}V_1}$, set $x^+$ to be the 
unique element in $\Phi_{\mathscr{C}}^+$ with $\pi_{V_1}(x^+)=\widehat{x}_{V_1}$. 

\item Let $\varphi_{V_1}\colon V\to \R$ be the unique linear map satisfying the requirement that
$\varphi_{V_1}(v)=0$ for all $v\in V_0$, and $\varphi_{V_1}(v)=1$ for all $v\in V_1$.
\end{enumerate}
\end{defn}

Observe that $\pi_{V_1}(-x)= \pi_{V_1}(x)$ for all $x\in V$, and  $\widehat{y} =\frac{y}{\varphi_{V_1}(y)}$ for all $y\in V\setminus V_0$. 

Also observe that $\widehat{\Phi}_{\mathscr{C}{V_1}}\subseteq\conv(\widehat{\Pi})$, where 
$\conv(X)$ denotes the convex hull of a set $X$, and $\widehat{\Pi}=\{\,\widehat{x}\mid x\in \Phi_{\mathscr{c}} \,\}$.
If $\Pi$ in the Coxeter datum $\mathscr{C}=(\,V, \Pi, B\,)$ is a finite set (in which case the associated Coxeter 
group $W$ is finitely generated), then we see that $\widehat{\Phi}_{\mathscr{C}{V_1}}$ is contained in the compact set
$\conv(\widehat{\Pi})$. Consequently, if $\Pi$ is finite, then the accumulation points of $\widehat{\Phi}_{\mathscr{C}V_1}$
are contained in $\conv(\widehat{\Pi})$.   

\begin{defn}
Keep all the notations of the previous definition.
\begin{enumerate}
\item The set of \df{limit roots}  $E(\Phi_{\mathscr{C}V_1})$ (with respect to $V_1$)
 is the set of accumulation points of 
$\widehat{\Phi}_{\mathscr{C}V_1}$. 

\item The \df{isotropic cone} $Q$ (associated to the bilinear form $B$) is the set
$$Q=\{\, v\in V\mid B(v, v)=0\,\}.$$ Furthermore, define the \df{normalized isotropic cone}  $\widehat{Q}_{V_1}$ (with respect to $V_1$) by
 $\widehat{Q}_{V_1}= Q\cap V_1$.
 \end{enumerate}
\end{defn}

\begin{example}
\label{eg: special}
Let $\mathscr{C}_{L.I.}=(\,V_{L.I.}, \Pi_{L.I.}, B_e\,)$ be the free Coxeter datum in Definition \ref{df: free}.  For each
$v\in V_{L.I.}$, there is a unique expression of the form $v=\sum_{a\in \Pi} v_a a$, $v_a\in \R$. Then the hyperplane
$V_1:=\{\,v\in V_{L.I.} \mid \sum_{a\in \Pi} v_a=1\,\}$ is transverse to $\Phi_{\mathscr{C}_{L.I.}}^+$, and
$V_0:=\{\, v\in V\mid \sum_{a\in \Pi} v_a=0    \,\}$ is the corresponding hyperplane obtained from translating $V_1$ 
to contain the origin. Observe that under these conditions the corresponding $\varphi_{V_1}$ has the property that 
$\varphi_{V_1}(v)=\sum_{a\in \Pi} v_a$ for all $v\in V_{L.I.}$. 
\end{example}

%In \cite{DHR13}, it was shown that:

\begin{thm}\textup{\cite[Theorem 2.7]{HLR11}}
\label{thm: limit}
Let $\mathscr{C}=(V,\Pi,B)$ be a Coxeter datum, and let $V_1$ be a transverse hyperplane, and 
let $\widehat{Q}$ be the corresponding normalized isotropic cone. Then
\begin{enumerate}
\item $E(\Phi_{\mathscr{C} V_1})\neq \emptyset$ if and only if the  Coxeter
group $W$ is infinite.

\item $E(\Phi_{\mathscr{C}V_1})\subseteq \widehat{Q}$.   
\end{enumerate}
\qed
\end{thm}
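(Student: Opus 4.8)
The plan is to prove both parts by analyzing the behaviour of the $B$-value $B(\widehat{x},\widehat{x})$ along sequences of normalized roots that converge to a limit point, together with a careful bookkeeping of how the linear functional $\varphi_{V_1}$ grows along infinite reduced sequences. For part (a), the ``if'' direction is the substantive one: if $W$ is infinite then by Proposition~\ref{pp: anu3}~(c) the root system $\Phi$ is infinite, so $\Phi^+$ is infinite, and hence $\widehat{\Phi}_{\mathscr{C}V_1}$ is an infinite subset of the compact set $\conv(\widehat{\Pi})$ (using here that $\Pi$ is finite). An infinite subset of a compact metric space has an accumulation point, so $E(\Phi_{\mathscr{C}V_1})\neq\emptyset$. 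For the ``only if'' direction, if $W$ is finite then $\Phi$ is finite, so $\widehat{\Phi}_{\mathscr{C}V_1}$ is a finite set and therefore has no accumulation points, giving $E(\Phi_{\mathscr{C}V_1})=\emptyset$.

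For part (b), the core idea is that roots $x\in\Phi$ satisfy $B(x,x)=1$ (since $x=wa$ and $B$ is $W$-invariant with $B(a,a)=1$), but after normalizing, $B(\widehat{x},\widehat{x})=B(x,x)/\varphi_{V_1}(x)^2 = 1/\varphi_{V_1}(x^+)^2$. So the key step is to show that if $\widehat{x}_n\to z$ with $\widehat{x}_n\in\widehat{\Phi}_{\mathscr{C}V_1}$ all distinct, then $\varphi_{V_1}(x_n^+)\to\infty$; combined with continuity of $v\mapsto B(v,v)$ this forces $B(z,z)=\lim 1/\varphi_{V_1}(x_n^+)^2 = 0$, i.e. $z\in Q\cap V_1=\widehat{Q}$. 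To see that $\varphi_{V_1}(x_n^+)\to\infty$: the positive roots with $\varphi_{V_1}$-value bounded by any fixed constant $M$ form a set whose normalizations lie in $\conv(\widehat\Pi)$, hence are contained in a bounded region of $V$; since $\Phi$ is discrete (a standard fact, noted in the introduction, or obtainable via Proposition~\ref{pp: eqv} by passing to $\mathscr{C}_{L.I.}$ where $\Pi_{L.I.}$ is a genuine basis), this set is finite. Therefore along an infinite sequence of distinct normalized roots the values $\varphi_{V_1}(x_n^+)$ cannot stay bounded, so a subsequence tends to $\infty$; but since we may run this argument on every subsequence, the whole sequence does.

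The main obstacle is the discreteness claim — that $\{x\in\Phi^+ : \varphi_{V_1}(x)\le M\}$ is finite for each $M$. One clean way to establish this is to work in the free Coxeter datum $\mathscr{C}_{L.I.}$ and use Proposition~\ref{pp: eqv}: there $\Pi_{L.I.}$ is linearly independent, so the $W_{L.I.}$-action is the standard geometric representation, and classical results (e.g. Bourbaki, or \cite{BH93}) give that $\Phi_{\mathscr{C}_{L.I.}}$ meets any bounded region in finitely many points because each root has coefficients in $\Z$ relative to the basis $\Pi_{L.I.}$ and bounded coefficients means finitely many integer vectors; then transport back via $f$, noting $f$ is linear hence maps bounded sets to bounded sets and $\varphi$ is comparable along the two data. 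Alternatively, one argues directly: a root $x$ with $\varphi_{V_1}(x)$ small but $B(x,x)=1$ would force $\widehat x$ near $\widehat Q$ yet $x$ itself of small norm, and one derives a contradiction with the fact that positive roots are bounded away from $0$ in the ambient norm — but the linear-independence reduction is the most economical. Once finiteness is in hand, the rest is the short continuity argument above.

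\medskip

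Note that part (b) also cleanly implies the ``interesting'' half of part (a) is consistent: since $\widehat{Q}$ is a proper closed subset of $\conv(\widehat\Pi)$ whenever $B$ is not totally degenerate, the limit roots are confined to a thin set, which is the starting point for the finer analysis of neighbourhoods in later sections.
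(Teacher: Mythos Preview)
The paper itself gives no proof of this theorem: it is stated as a citation of \cite[Theorem~2.7]{HLR11} and closed with a \qed. Your overall strategy is the standard one (and is essentially the argument in \cite{HLR11}): for (a) use finiteness/infiniteness of $\Phi$ together with compactness of $\conv(\widehat\Pi)$ when $|\Pi|<\infty$; for (b) show that along any injective sequence of positive roots $x_n$ one has $\varphi_{V_1}(x_n)\to\infty$, whence $B(\widehat{x}_n,\widehat{x}_n)=1/\varphi_{V_1}(x_n)^2\to 0$ and the limit lies in $\widehat Q$ by continuity of $B$.

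There is, however, a genuine error in your justification of the key finiteness step. You write that in the free datum ``each root has coefficients in $\Z$ relative to the basis $\Pi_{L.I.}$''. This is false in general: already in a rank~$2$ datum with $B(a,b)=-\cos(\pi/5)$ one has $r_a b = b + 2\cos(\pi/5)\,a$, with an irrational coefficient; and for $B(a,b)\le -1$ non-integral the same phenomenon occurs. So the ``finitely many integer vectors'' argument does not apply. The conclusion you need --- that $\{x\in\Phi^+:\varphi_{V_1}(x)\le M\}$ is finite for every $M$, equivalently that $\Phi$ is discrete --- is nonetheless true (the paper's introduction records it as ``well-known''), but it requires a different argument. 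One clean route, used in \cite{HLR11}, is via \emph{depth}: there are only finitely many positive roots of depth $\le d$ for each $d$ (since positive roots biject with reflections and $W$ has finitely many elements of each length), and one shows that bounded $\varphi_{V_1}$ forces bounded depth. Your ``alternative'' sketch does not close this gap either. Once discreteness is properly established, the rest of your argument goes through as written.
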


%%%%%%%%%%%%%%%%%%%%%%%%%%%%%%%%%%%%%%%%%%%%%%%%%%%%%%%%%%%%%%
%
%%%%%%%%%%%%%%%%%%%%%%%%%%%%%%%%%%%%%%%%%%%%%%%%%%%%%%%%%%%%%%
%The main results of this paper is the following:

%\begin{thm}
%\label{thm: main}
%Let $\mathscr{C}=(V,\Pi,B)$ be a Coxeter datum whose associated 
%abstract Coxeter  group $W$ is infinite and of finite rank. Let $V_1$ be a transverse hyperplane. 
%\begin{enumerate}
%\item If $(x_i)_{i\in \N}$ is a sequence of roots such that $x_{i+1} \domneq x_i$ for all $i\in \N$, 
%then $(\widehat{x_i}_{V_1})_{i\in \N}$ converges.
%
%\item The sequence of normalized roots defined by a geodesic ray in the Cayley graph of $W$
%is convergent. 
%
%\item Suppose that $(\widehat{x_i}_{V_1})_{i\in \N}$ is a convergent infinite injective sequence of normalized roots 
%with limit $x\in \widehat{Q}_{V_1}$. Let $(\widehat{y_i}_{V_1})_{i\in \N}$ be an infinite injective sequence of normalized roots
%such that  $-D\leq B(x_i, y_i)\leq D$ for all $i\in \N$ and for some fixed $D\in \R$. Then $(\widehat{y_i}_{V_1})_{i\in \N}$ 
%also converges to $x$.
%\end{enumerate}
%\end{thm}

%The rest of this paper is devoted to proving the above theorem. The strategy is to first prove Theorem~\ref{thm: main} 
%in the situation of Exapmle~\ref{eg: special}, and then show that these special case results combined with the $W$-equivariant map %$f$ of Proposition~\ref{pp: eqv} together with another bijection between points in different transverse planes 
%yield Theorem~\ref{thm: main}. 

%\subsection{Action of $W$ on $E$}

\begin{prop}
\label{pp:compact}
Let $\mathscr{C}=(V, \Pi, B)$ be a Coxeter datum in which $|\Pi|$ is finite, and suppose that the corresponding Coxeter group
$W$ is infinite. Then the set of limit roots $E(\Phi_{\mathscr{C}V_1})$ is compact.
\end{prop}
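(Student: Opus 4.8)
The plan is to recognise $E(\Phi_{\mathscr{C}V_1})$ as a closed subset of the compact set $\conv(\widehat{\Pi})$, after first noting that the entire discussion takes place inside a finite-dimensional Euclidean space.

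First I would observe that, since $\Phi^+_{\mathscr{C}}\subseteq\PLC(\Pi)$ and $\Phi_{\mathscr{C}}=\Phi^+_{\mathscr{C}}\du\Phi^-_{\mathscr{C}}$ by Proposition~\ref{pp: anu3}, we have $\Phi_{\mathscr{C}}\subseteq\vspan(\Pi)$, and hence (using $\widehat{v}=v/\varphi_{V_1}(v)$) also $\widehat{\Phi}_{\mathscr{C}V_1}\subseteq\vspan(\Pi)$ and $\conv(\widehat{\Pi})\subseteq\vspan(\Pi)$. Thus everything lives in the finite-dimensional real vector space $U:=\vspan(\Pi)$, which we equip with its usual topology; in $U$ compactness is equivalent to being closed and bounded, and $\conv(\widehat{\Pi})$, being the convex hull of the finite set $\widehat{\Pi}$, is compact. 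Since $\widehat{\Phi}_{\mathscr{C}V_1}\subseteq\conv(\widehat{\Pi})$ (as already remarked after Definition~\ref{df: norm}), the set $E(\Phi_{\mathscr{C}V_1})$ of accumulation points of $\widehat{\Phi}_{\mathscr{C}V_1}$ is contained in $\conv(\widehat{\Pi})$ and is therefore bounded.

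It then remains to check that $E(\Phi_{\mathscr{C}V_1})$ is closed, and for this I would invoke, with a one-line proof, the standard fact that in a metric space the set $S'$ of accumulation points of any subset $S$ is itself closed: if $x$ is an accumulation point of $S'$, then every neighbourhood $U$ of $x$ contains some $y\in S'$ with $y\neq x$; picking a neighbourhood $V\subseteq U$ of $y$ with $x\notin V$ and using that $y$ is an accumulation point of $S$, we find a point of $S$ lying in $V\setminus\{x\}\subseteq U\setminus\{x\}$, whence $x\in S'$. Taking $S=\widehat{\Phi}_{\mathscr{C}V_1}$ shows that $E(\Phi_{\mathscr{C}V_1})=S'$ is closed; being closed and bounded in the finite-dimensional space $U$, it is compact.

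I do not anticipate a genuine obstacle here. The only points requiring a modicum of care are the preliminary reduction to the finite-dimensional space $\vspan(\Pi)$ (so that ``closed and bounded implies compact'' and ``the convex hull of a finite set is compact'' are legitimately available) and the elementary verification that the derived set of a subset of a metric space is closed. Note that the hypothesis that $W$ is infinite is not actually needed for compactness itself; it only guarantees, via Theorem~\ref{thm: limit}~(a), that $E(\Phi_{\mathscr{C}V_1})$ is nonempty.
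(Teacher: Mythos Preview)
Your proof is correct and follows essentially the same approach as the paper: both argue that $E(\Phi_{\mathscr{C}V_1})$ is closed (as a derived set) and bounded (being contained in $\conv(\widehat{\Pi})$), hence compact in finite dimensions. You are simply more explicit about the finite-dimensional reduction and the closedness of derived sets, and you correctly observe that the inclusion into $\widehat{Q}$ from Theorem~\ref{thm: limit} is not actually needed here.
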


\begin{proof}
It follows from Theorem~\ref{thm: limit} that 
$$E(\Phi_{\mathscr{C}V_1})\subseteq \widehat{Q}\cap \conv(\widehat{\Pi}).$$
Since $E(\Phi_{\mathscr{C}V_1})$, by definition, is topologically closed, and
since $\conv(\widehat{\Pi})$ is bounded whenever $|\Pi|$ is finite, it follows 
that $E(\Phi_{\mathscr{C}V_1})$ is compact. 
\end{proof}

Following the convention set in \cite{HLR11}, we define
$$D=\bigcap_{w\in W} w(V\setminus V_0)\cap V_1 = V_1\setminus \bigcup_{w\in W} w V_0,$$
and we define the $\cdot$ action of $W$ on $D$ as follows: for any $w\in W$ and $x\in D$,
$$w\cdot x =\widehat{wx}.$$
Observe that the property that $WD=\bigcup_{w\in W}w D \subseteq V\setminus V_0$ 
guarantees that this $\cdot$ action of $W$ on $D$ is well-defined. Furthermore, 
we observe that each $w\in W$ acts continuously on $D$. The next result is taken from \cite{HLR11}
which summarizes a number of key facts:

\begin{prop}\textup{\cite[Proposition 3.1]{HLR11}}
\label{pp:D}
\begin{itemize}
\item[(i)] $\widehat{\Phi}_{\mathscr{C}V_1}$ and $E(\Phi_{\mathscr{C}V_1})$ are contained in $D$.
\item[(ii)] $\widehat{\Phi}_{\mathscr{C}V_1}$ and $E(\Phi_{\mathscr{C}V_1})$ are stable by the $\cdot$ action of $W$; 
moreover $\widehat{\Phi}_{\mathscr{C}V_1}= W\cdot \Pi$.
\item[(iii)] The topological closure $\widehat{\Phi}_{\mathscr{C}V_1}\uplus E(\Phi_{\mathscr{C}V_1})$ is stable under 
the $\cdot$ action of $W$.
\end{itemize}
\qed
\end{prop}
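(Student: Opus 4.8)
The plan is to prove the three items in the order (i), (ii), (iii): the limit-root half of (ii) will rest on (i), and (iii) is then essentially formal. Throughout I would use the identities $\widehat{y}=y/\varphi_{V_1}(y)$ for $y\in V\setminus V_0$ and $w\widehat{x}=wx/\varphi_{V_1}(x)$ for $x\in\Phi^+$, the $W$-invariance of $\Phi$, and the fact that $\PLC(\Pi)$ lies in the open half-space $V_0^{+}$, so that no root lies on $V_0$.

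For (i), the statement about $\widehat{\Phi}_{\mathscr{C}V_1}$ is direct: for $x\in\Phi^+$ and $w\in W$ the vector $w^{-1}x$ again lies in $\Phi=\Phi^+\uplus\Phi^-$ by Proposition~\ref{pp: anu3}, so neither $w^{-1}x$ nor $-w^{-1}x$ lies on $V_0$; hence $\varphi_{V_1}(w^{-1}\widehat{x})=\varphi_{V_1}(w^{-1}x)/\varphi_{V_1}(x)\neq0$, i.e.\ $\widehat{x}\notin wV_0$, and as $w$ is arbitrary $\widehat{x}\in D$. For a limit root I would argue by contradiction. Write $\widehat{x}_\infty=\lim_n\widehat{x}_n$ with the $\widehat{x}_n\in\widehat{\Phi}_{\mathscr{C}V_1}$ pairwise distinct and put $x_n:=x_n^+\in\Phi^+$; note $\widehat{x}_\infty\in\conv(\widehat{\Pi})\subseteq V_1$, so $w^{-1}\widehat{x}_\infty\neq0$ for every $w$ since $0\notin V_1$. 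If $\widehat{x}_\infty\in wV_0$ for some $w$, then $\varphi_{V_1}(w^{-1}\widehat{x}_\infty)=0$; since the set $\{\beta\in\Phi^+\mid w^{-1}\beta\in\Phi^-\}$ is finite (a standard consequence of Proposition~\ref{pp: anu3}), $w^{-1}x_n\in\Phi^+$ for all large $n$, whence $\widehat{w^{-1}x_n}=w^{-1}\widehat{x}_n/\varphi_{V_1}(w^{-1}\widehat{x}_n)$ is a normalized positive root and so lies in $\conv(\widehat{\Pi})$. But the numerator $w^{-1}\widehat{x}_n$ tends to $w^{-1}\widehat{x}_\infty\neq0$ while $\varphi_{V_1}(w^{-1}\widehat{x}_n)\to0$, so $\widehat{w^{-1}x_n}$ escapes every bounded set, contradicting the boundedness of $\conv(\widehat{\Pi})$ (which holds since $|\Pi|<\infty$). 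Hence $\widehat{x}_\infty\in D$.

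For (ii), stability of $\widehat{\Phi}_{\mathscr{C}V_1}$ is immediate from $w\cdot\widehat{x}=\widehat{wx}$, and $\widehat{\Phi}_{\mathscr{C}V_1}=W\cdot\widehat{\Pi}$ follows since every root has the form $wa$ with $w\in W$, $a\in\Pi$, so $\widehat{wa}=w\cdot\widehat{a}$. For $E(\Phi_{\mathscr{C}V_1})$, fix $\widehat{x}_\infty=\lim_n\widehat{x}_n$ with the $\widehat{x}_n\in\widehat{\Phi}_{\mathscr{C}V_1}$ pairwise distinct and fix $w\in W$. By (i) the $\widehat{x}_n$ and $\widehat{x}_\infty$ all lie in $D$, on which $w$ acts continuously, so $w\cdot\widehat{x}_n\to w\cdot\widehat{x}_\infty$; the $w\cdot\widehat{x}_n=\widehat{wx_n^+}$ are pairwise distinct normalized roots (distinct positive roots have distinct normalizations, since $B(\beta,\beta)=1$ for all $\beta\in\Phi$), so their limit $w\cdot\widehat{x}_\infty$ is an accumulation point of $\widehat{\Phi}_{\mathscr{C}V_1}$, i.e.\ lies in $E(\Phi_{\mathscr{C}V_1})$. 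Finally, for (iii): since $E(\Phi_{\mathscr{C}V_1})$ is by definition the accumulation set of $\widehat{\Phi}_{\mathscr{C}V_1}$, the topological closure of $\widehat{\Phi}_{\mathscr{C}V_1}$ equals $\widehat{\Phi}_{\mathscr{C}V_1}\cup E(\Phi_{\mathscr{C}V_1})$, and the union is disjoint because $B(\widehat{x},\widehat{x})=\varphi_{V_1}(x^+)^{-2}\neq0$ for $\widehat{x}\in\widehat{\Phi}_{\mathscr{C}V_1}$ whereas $E(\Phi_{\mathscr{C}V_1})\subseteq\widehat{Q}$ by Theorem~\ref{thm: limit}; by (i) this closure lies in $D$, and by (ii) each of its two pieces is $\cdot$-stable, so their union is.

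The only step that is not routine bookkeeping is the inclusion $E(\Phi_{\mathscr{C}V_1})\subseteq D$ in (i), that is, ruling out a limit root sitting on some translate $wV_0$. The mechanism above---such a point would drag the $W$-images of the approximating normalized roots out of $\conv(\widehat{\Pi})$---is the crux of the argument, and it is precisely where the compactness of $\conv(\widehat{\Pi})$ (hence the finiteness of $\Pi$) and the finiteness of inversion sets both enter; everything else reduces to the two normalization identities recorded at the outset together with the elementary geometry of $V_0$, $V_1$ and $\PLC(\Pi)$.
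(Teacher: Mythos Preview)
The paper does not prove this proposition; it is quoted from \cite{HLR11} and marked with \qed. So there is no in-paper argument to compare against.

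Your proof is correct. A couple of small wrinkles worth noting. First, in (i) you justify $w^{-1}\widehat{x}_\infty\neq 0$ by ``$0\notin V_1$'', but $w^{-1}\widehat{x}_\infty$ need not lie in $V_1$; the right reason is simply that $w^{-1}$ is invertible and $\widehat{x}_\infty\neq 0$. Second, your argument for $E(\Phi_{\mathscr{C}V_1})\subseteq D$ uses boundedness of $\conv(\widehat{\Pi})$, hence finiteness of $\Pi$; the paper is working throughout under that standing hypothesis, so this is fine here, but the statement in \cite{HLR11} is formulated without that restriction, so your proof is slightly narrower than the cited source. Finally, in (ii) the distinctness of the $w\cdot\widehat{x}_n$ is as you say, but the one-line justification should be that the normalization map is injective on $\Phi^+$ (and $wx_n^+=-wx_m^+$ would force $x_n^+=-x_m^+$), rather than the bare fact $B(\beta,\beta)=1$.
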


Furthermore, it has been observed in \cite{HLR11} that the $\cdot$ action has the following nice geometric description:

\begin{prop}\textup{\cite[Proposition 3.5]{HLR11}}
\begin{itemize}
\item[(i)] Let $\alpha\in \Phi_{\mathscr{C}}$, and $x\in D\cap Q$. Denote by $L(\widehat{\alpha}, x)$ the line containing
           $\widehat{\alpha}$ and $x$. Then
					\begin{itemize}
					\item[(a)] if $B(\alpha, x)=0$, then $L(\widehat{\alpha}, x)$ intersects $Q$ only at $x$, and $r_{\alpha}\cdot x =x$;
					\item[(b)] if $B(\alpha, x)\neq 0$, then $L(\widehat{\alpha}, x)$ intersects $Q$ in two distinct points, namely, $x$ and 
					           $r_{\alpha}\cdot x$.
					\end{itemize}
\item[(ii)] Let $\alpha_1$ and $\alpha_2$ be two distinct roots in $\Phi_{\mathscr{C}}$, $x\in L(\widehat{\alpha_1}, \widehat{\alpha_2})\cap Q$, and $w\in W$. Then $w\cdot x\in L(w\cdot \alpha_1, w\cdot \alpha_2)\cap Q$.					
\end{itemize}
\qed
\label{pp:geom}

\end{prop}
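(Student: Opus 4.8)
The plan is to exploit the fact that both $Q$ and the $\cdot$-action interact cleanly with the underlying linear structure, so that the geometric statement about lines and the quadric $Q$ reduces to a one-variable computation. First I would set up coordinates along the relevant line. For part (i), fix $\alpha\in\Phi_{\mathscr{C}}$ and $x\in D\cap Q$, and pick a representative $v\in V\setminus V_0$ of $\widehat{\alpha}$ (for instance $v=\alpha^+$, or more conveniently the representative with $\varphi_{V_1}(v)=1$, so $v=\widehat{\alpha}$ itself as a point of $V_1$). Points of the line $L(\widehat{\alpha},x)$ are $p(t)=x+t(\widehat{\alpha}-x)$ for $t\in\R$. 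Then I would compute $B(p(t),p(t))$ using $B(x,x)=0$ (since $x\in Q$) and bilinearity:
\[
B(p(t),p(t)) = 2t\,B(x,\widehat{\alpha}-x) + t^2\,B(\widehat{\alpha}-x,\widehat{\alpha}-x) = t\bigl(2B(x,\widehat{\alpha}) + t\,B(\widehat{\alpha},\widehat{\alpha})\bigr),
\]
using $B(x,x)=0$ again in the last step. So $p(t)\in Q$ iff $t=0$ (giving $x$) or $t\,B(\widehat{\alpha},\widehat{\alpha}) = -2B(x,\widehat{\alpha})$.

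Next I would split on whether $B(\alpha,x)=0$. Note $B(\widehat{\alpha},x)$ is a positive scalar multiple of $B(\alpha,x)$ (the scalar being $1/\varphi_{V_1}(\alpha^+)$ or similar), so $B(\widehat{\alpha},x)=0$ iff $B(\alpha,x)=0$. If $B(\alpha,x)=0$, the quadratic above becomes $t^2 B(\widehat{\alpha},\widehat{\alpha})$; since $\alpha$ is a root, $B(\alpha,\alpha)=1\neq 0$, hence $B(\widehat{\alpha},\widehat{\alpha})\neq 0$, and the only solution is $t=0$, so $L(\widehat{\alpha},x)\cap Q=\{x\}$. For the reflection statement, $r_\alpha(x)=x-2B(x,\alpha)\alpha=x$ on the level of $V$, and since $w\cdot y=\widehat{wy}$, we get $r_\alpha\cdot x=\widehat{r_\alpha x}=\widehat{x}=x$ (as $x\in V_1$). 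If $B(\alpha,x)\neq 0$, then $t=0$ and $t^*:=-2B(x,\widehat{\alpha})/B(\widehat{\alpha},\widehat{\alpha})\neq 0$ are two distinct roots of the quadratic, giving two distinct intersection points $x$ and $p(t^*)$; it remains to identify $p(t^*)$ with $r_\alpha\cdot x$. I would do this by checking that $r_\alpha(x)$ (computed in $V$) lies on the line through $x$ and $\widehat{\alpha}$ — indeed $r_\alpha(x)-x = -2B(x,\alpha)\alpha$ is parallel to $\alpha$, hence to $\widehat{\alpha}$ as a direction in $V$ modulo translation — and then normalizing, $r_\alpha\cdot x=\widehat{r_\alpha x}$ lies on $L(\widehat{\alpha},x)$; since it also lies in $Q$ (as $B$ is $W$-invariant and $x\in Q$) and is distinct from $x$ (one checks $\widehat{r_\alpha x}\neq x$ precisely because $B(\alpha,x)\neq 0$), it must be the second intersection point $p(t^*)$.

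Part (ii) is then nearly formal. Given distinct roots $\alpha_1,\alpha_2$, a point $x\in L(\widehat{\alpha_1},\widehat{\alpha_2})\cap Q$, and $w\in W$: since $W$ preserves $B$, $wx\in Q$, and since $x\neq$ the isotropic behaviour is controlled, $w\cdot x=\widehat{wx}\in Q\cap V_1$. For the line, I would argue that $w$ acts on $V$ linearly, so $wx$ lies on the line $L'$ through $w\widehat{\alpha_1}$ and $w\widehat{\alpha_2}$ in $V$ (the image of the affine line spanned by suitable representatives); passing to normalizations, $\widehat{wx}$ lies on the line through $\widehat{w\widehat{\alpha_1}}=w\cdot\alpha_1$ and $\widehat{w\widehat{\alpha_2}}=w\cdot\alpha_2$. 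A small point to handle is that $w$ maps the representatives with $\varphi_{V_1}=1$ to vectors not necessarily in $V_1$, so one must either normalize at the end or note that collinearity in the projective sense is preserved by any linear map and then intersect with $V_1$.

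The main obstacle I anticipate is not conceptual but bookkeeping: keeping straight the distinction between a normalized root $\widehat{\alpha}$ as a point of $V_1$ versus as a ray in $V$, and ensuring the various positive scalars (the $\varphi_{V_1}$-values) never vanish so that the normalization maps are defined at every stage — in particular verifying $wx, w\widehat{\alpha_i}\in V\setminus V_0$, which is exactly where the hypothesis $x\in D$ (so $wx\notin V_0$) and the fact that normalized roots lie in $D$ (Proposition \ref{pp:D}) get used. The genuinely delicate case is $B(\alpha,x)\neq 0$ in part (i), where one must confirm the algebraic second root $p(t^*)$ coincides with the geometrically-defined $r_\alpha\cdot x$ rather than merely lying on the same line and quadric; since a line meets $Q$ in at most two points and both $x$ and $r_\alpha\cdot x$ are on $L(\widehat{\alpha},x)\cap Q$ and distinct, the identification is forced, but writing this cleanly requires the distinctness check $\widehat{r_\alpha x}\neq x$, which follows from $r_\alpha x\neq\lambda x$ for any $\lambda$ when $B(\alpha,x)\neq 0$.
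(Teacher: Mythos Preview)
The paper does not give its own proof of this proposition: it is quoted verbatim from \cite[Proposition~3.5]{HLR11} and closed with a \qed, so there is nothing in the paper to compare your argument against. Your outline is the standard one (and essentially what one finds in the original source): parametrize the affine line, reduce $L(\widehat\alpha,x)\cap Q$ to the roots of a quadratic in the parameter, and identify the second root with $r_\alpha\cdot x$ by observing that $r_\alpha x-x\in\R\alpha$ together with $W$-invariance of $B$.

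One small algebraic slip worth flagging: in expanding $t^2B(\widehat\alpha-x,\widehat\alpha-x)$ you dropped the cross term, so the correct factorization is
\[
B(p(t),p(t))=t\Bigl(2B(x,\widehat\alpha)+t\bigl(B(\widehat\alpha,\widehat\alpha)-2B(x,\widehat\alpha)\bigr)\Bigr),
\]
not $t\bigl(2B(x,\widehat\alpha)+tB(\widehat\alpha,\widehat\alpha)\bigr)$. This does not affect the argument, since you identify the second intersection point as $r_\alpha\cdot x$ by a direct check rather than by solving for $t^*$ explicitly; and indeed the hypothesis $x\in D$ (so $r_\alpha x\notin V_0$) is exactly what guarantees that this second point lands at a finite parameter value on the affine line in $V_1$.
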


\section{Topology on $E$}

Let $\mathscr{C}=(\, V, \Pi, (\,,\,)\,)$ be a Coxeter datum, and let 
$V_1$ be a transverse hyperplane.  
The aim of this section is to describe the topology on $E(\Phi_{\mathscr{C}V_1})$ (induced from that of $V$)
in terms of sets defined using the dominance partial order.

For simplicity, we assume that 
$\mathscr{C}$  \emph{is equal to its own free Coxeter datum} (as defined in Definition~\ref{df: free}).
% $\mathscr{C}_{L.I.}=(\, V_{L.I.}, \Pi_{L.I.}, (\,,\,)_e\,)$ be a fixed Coxeter datum in which $\Pi_{L.I.}$ is a basis for $V_{L.I.}$
 %(as in Proposiiton \ref{pp: eqv} and Example \ref{eg: special}), and let $(W, R)$ be the associated abstract Coxeter system.
For each $v\in V$, $a\in \Pi$ write $v_a$ for the coordinate for $v$ with respect to $a$, and furthermore, let $V_1$ be the affine hyperplane given by $V_1=\{\,v\in V\mid \sum_{a\in \Pi} v_a=1\,\}$. As we have observed in Example~\ref{eg: special} that $V_1$ is a transverse hyperplane for $\Phi_{\mathscr{C}}^+$. The corresponding hyperplane $V_0$ is given by 
$V_0=\{\, v\in V\mid \sum_{a\in \Pi} v_a =0\,\}$. Consequently, the corresponding map $\varphi_{V_1}\colon V\to \R$ is then given by the rule that $\varphi_{V_1}(v)=\sum_{a\in \Pi} v_a$ (as was implicit in Example~\ref{eg: special}).   
We shall study the topology on $E(\Phi_{\mathscr{C}V_1})$ for this specific Coxeter datum $\mathscr{C}$ and the specific transverse hyperplane $V_1$. By applying the $W$-equivariant map $f$ of Proposition~\ref{pp: eqv} and natural  
homeomorphisms between different transverse planes, the same topological results shall hold for more general Coxeter data and transverse hyperplanes.  

To simplify notation, we shall adopt the following:
\begin{note}
\noindent\rm{(i)}\quad Write $\Phi$ for $\Phi_{\mathscr{C}}$, and 
$\Phi^+$ (respectively, $\Phi^-$) for $\Phi_{\mathscr{C}}^+$ (respectively, $\Phi_{\mathscr{C}}^-$).

\noindent\rm{(ii)}\quad Remove the subscript $V_1$ in $\widehat{\cdot}_{V_1}$ and $\varphi_{V_1}$.

\noindent\rm{(iii)}\quad For $v\in V$, write $|v|$ for $\varphi_{V_1}(v)$ (this is denoted as $|v|_1$ in \cite{HLR11}). 

\noindent\rm{(iv)}\quad If $W'$ is a reflection subgroup of $W$, 
then write $E(\Phi_{W'})$ for the set of accumulation points for $\widehat{\Phi_{W'}}$.
% (as a subset of $\widehat{\Phi}_{\mathscr{C}_{L.I.}}$).

\noindent\rm{(v)}\quad Write $E$ in place of $E(\Phi_{ \mathscr{C} V_1})$. 

\noindent\rm{(v)}\quad Write $Q$ for the isotropic cone of the bilinear form $B$. 

\end{note}

Throughout this section, let $a,b\in\Phi^+$ be such that $B(a,b)<-1$.
We may set $\cosh \theta =-B(a, b)$ for some $\theta > 0$. 
% Therefore W_{a,b} \iso D_\infty and {r_a,r_b} are canonical generators.
Let $W_{a,b}$ be the dihedral reflection subgroup of $W$ generated by $r_a$ and $r_b$.
%Then $W_{a,b} \iso D_\infty$ and $\{r_a,r_b\}$ is a  canonical generating set.
Set $V'=\R a\oplus \R b$, $\Pi'=\{a, b\}$, and let $B'(\,,\,)$ be the restriction of the bilinear form $B(\,,\,)$ on $V'$.
Then $\mathscr{C}'=(V', \Pi', B')$ is a Coxeter datum. It can be observed that $\Phi^+_{\mathscr{C}'}\subset \Phi^+$.
Set $E(W_{a,b})=E(\Phi_{\mathscr{C}'})$.
It follows from direct calculations that $(\R a\oplus \R b)\cap Q$ consists of two lines, given by: 

$$(\R a \oplus \R b)\cap Q = \R((\cosh\theta +\sinh\theta)a+b)\cup \R((\cosh\theta-\sinh\theta)a+b).$$
Thus $E(W_{a, b})\subset E$ is the intersection 
$$((\R a\oplus \R b)\cap Q)\cap V_1=\widehat{(\R a\oplus \R b)\cap Q}.$$
More explicitly, 
\begin{align*}
E(W_{a, b}) =\bigg\{\,&\frac{(\cosh\theta+\sinh\theta)|a|}{(\cosh\theta +\sinh\theta)|a|+|b|}\widehat{a}+
                 \frac{|b|}{(\cosh\theta +\sinh\theta)|a|+|b|}\widehat{b},\\ 
								&\frac{(\cosh\theta-\sinh\theta)|a|}{(\cosh\theta -\sinh\theta)|a|+|b|}\widehat{a}+
                 \frac{|b|}{(\cosh\theta -\sinh\theta)|a|+|b|}\widehat{b}	
									\,\bigg\}.
\end{align*}									
For each $i\in \N$, we adopt the following notation 
\begin{equation}
\label{eq:ci}
c_i:= \frac{\sinh(i\theta)}{\sinh \theta}.
\end{equation}
 Then
\begin{align*}
(r_a r_b)^i a =c_{2i+1} a + c_{2i} b,\\
\noalign{\hbox{and}}
(r_b r_a)^i b =c_{2i} a + c_{2i+1} b. 
\end{align*}
Observe that 
\begin{align}
\label{eq:grad}
\lim_{i\to \infty}\frac{c_{2i+1}}{c_{2i}}
&=\lim_{i\to \infty} \frac{\sinh (2i\theta)\cosh \theta+\cosh(2i\theta)\sinh\theta}{\sinh(2i\theta)}\notag\\
&=\lim_{i\to \infty} (\cosh\theta +\coth(2i \theta)\sinh\theta)\notag\\
&=\cosh\theta +\sinh\theta.
\end{align}
Consequently, we see that 
\begin{align*}
\ainfty &= \lim_{i\to\infty}\widehat{(r_a r_b)^i a}\in \R((\cosh\theta+\sinh\theta)a+b)\\
\noalign{\hbox{and}}
\binfty &= \lim_{i\to \infty}\widehat{(r_b r_a)^i b} \in \R((\cosh\theta-\sinh\theta)a+b)
        = \R(a+(\cosh\theta+\sinh\theta)b).
\end{align*}
Thus 
\begin{align*}
\ainfty &=\frac{(\cosh\theta+\sinh\theta)|a|}{(\cosh\theta +\sinh\theta)|a|+|b|}\widehat{a}+
                 \frac{|b|}{(\cosh\theta +\sinh\theta)|a|+|b|}\widehat{b}\\
\noalign{\hbox{and}}
\binfty &=\frac{(\cosh\theta-\sinh\theta)|a|}{(\cosh\theta -\sinh\theta)|a|+|b|}\widehat{a}+
                 \frac{|b|}{(\cosh\theta -\sinh\theta)|a|+|b|}\widehat{b}.								
\end{align*}
And it follows readily that 
\begin{align*}
B(\ainfty, a) &=\frac{\sinh\theta}{(\cosh\theta +\sinh\theta)|a|+|b|} >0;\\
\noalign{\hbox{and}}
B(\ainfty, b) &=\frac{-\sinh\theta(\cosh\theta +\sinh\theta)}{(\cosh\theta +\sinh\theta)|a|+|b|} <0;\\
\noalign{\hbox{whereas}}
B(\binfty, a) &=\frac{-\sinh\theta}{(\cosh\theta -\sinh\theta)|a|+|b|}<0;\\
\noalign{\hbox{and}}
%\noalign{\hbox{and}}
B(\binfty, b) &=\frac{\sinh\theta (\cosh\theta-\sinh\theta)}{(\cosh\theta -\sinh\theta)|a|+|b|}>0.
\end{align*}

%Denote by $\ainfty$ the element of $E(W_{a,b})\subset E$ satisfying $B(a,\ainfty)>0$ and denote the other element of $E(W_{a,b})$ by $\binf$.
%REFER TO PRELIMIN BIT ABOUT NONAFFINE $D_\infty$.
For each $i\in \N$, defining $a_i=(r_ar_b)^{i}a$, and $b_i=(r_b r_a)^i b$. Note that
$$
a_{i+1}\dom a_i, \, B(a_i,\ainf)>0, \,   \text{and}\, \hat a_i\to \ainfty.
$$
For $i\in\N$ define
$$
A_i=\{  x\in\Phi^+\|  x\dom a_i\},
$$
and
$$
N_i=\acc\{ \hat x\| x\in A_i\}\subset E,
$$
where $\acc(X)$ of a set $X$ denotes the set of accumulation points of $X$.

%\begin{lemma}
It is clear from the definition that  $N_{i+1}\subset N_i$ and that $N_i$ is a closed subset of $V$ (and therefore $E$).
%\end{lemma}
%\begin{proof}
%The set of limit points is always closed. See, for example, Rudin's \emph{Principles of Mathematical Analysis}, Chapt 2 exo 6.
%\end{proof}

\begin{lemma}\label{lemma:neighbourhood}
$N_i$ is a neighborhood of $\ainf$. That is, there exists $\epsilon_i>0$ such that 
$B_{\epsilon_i}(\ainf)\cap E\subset N_i$ (where $B_{r}(x)$ is the open ball of radius $r$ centered at $x$).
\end{lemma}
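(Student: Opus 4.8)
The plan is to show that any limit point $x \in E$ sufficiently close to $\ainf$ is forced to lie in $N_i$, by producing a sequence of normalized roots $\widehat{x_n} \to x$ with each $x_n$ dominating $a_i$. The key mechanism is the geometric description of the $\cdot$ action in Proposition~\ref{pp:geom}: a point $x \in D \cap Q$ is a limit root, and applying the reflection $r_{a_i}$ (or iterates of $r_a r_b$) moves points of $\widehat{\Phi}$ around inside $D \cap Q$. I would first recall the dominance criterion from Lemma~\ref{lem:dom}(a): $y \dom a_i$ if and only if $B(y, a_i) \geq 1$. So $A_i = \{x \in \Phi^+ \mid B(x, a_i) \geq 1\}$, and the goal becomes: every limit point near $\ainf$ is an accumulation point of the set $\{\widehat{x} \mid x \in \Phi^+,\ B(x, a_i) \geq 1\}$.

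First I would establish that $\ainf$ itself lies in $N_i$, indeed in the \emph{interior} of a suitable region, by exploiting that $a_{i+k} \dom a_i$ for all $k \geq 0$ (dominance is transitive and $a_{i+1} \dom a_i$), so $\{\widehat{a_{i+k}} \mid k \geq 0\} \subset \{\widehat{x} \mid x \in A_i\}$ and hence $\ainf = \lim_k \widehat{a_{i+k}} \in N_i$. Next, the crucial step: I would take an arbitrary $x \in E$ near $\ainf$ and show it is approached by normalized roots dominating $a_i$. The idea is that $B(\ainf, a) > 0$ strictly (computed in the excerpt), so by continuity there is a neighborhood $U$ of $\ainf$ in which $B(\,\cdot\,, a_i)$ is bounded below by a positive constant — but we need it $\geq 1$, not just positive. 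To upgrade from ``positive'' to ``$\geq 1$'', I would use the dynamics: iterating $r_a r_b$ sends $\widehat{a_j}$ toward $\ainf$, and correspondingly, for a root $y$ with $\widehat{y}$ near $\ainf$ but $B(y, a_i)$ merely positive, one applies a suitable group element (a power of $r_a r_b$) to boost the bilinear form value. Concretely, if $\widehat{y} \to x$ and $B(y,\ainf) \neq 0$, then $(r_a r_b)^N y$ has $\widehat{(r_a r_b)^N y}$ still converging (along a diagonal sequence) to a point near $\ainf$, while $B((r_a r_b)^N y, a_i)$ grows, since $(r_a r_b)^N a_i = a_{i+N}$ points increasingly toward the isotropic direction and $B$ restricted to the hyperbolic plane $V'$ blows up along that direction.

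Here is the more careful route I would actually write. Pick $\epsilon_i$ so small that $B(z, a) > \delta$ for all $z \in \overline{B_{\epsilon_i}(\ainf)} \cap E$, some fixed $\delta > 0$ (possible since $B(\ainf, a) > 0$ and $E$ is compact, Proposition~\ref{pp:compact}). Given $x \in B_{\epsilon_i}(\ainf) \cap E$, write $x = \lim_n \widehat{y_n}$ with $y_n \in \Phi^+$. For large $n$, $B(y_n, a) > \delta |y_n|$ after normalizing, i.e. $B(\widehat{y_n}, a) > \delta$; likewise I would want to control $B(\widehat{y_n}, b)$. Then $B(\widehat{y_n}, a_i) = c_{2i+1} B(\widehat{y_n}, a) + c_{2i} B(\widehat{y_n}, b)$ — using $a_i = c_{2i+1} a + c_{2i} b$ — and since $c_{2i+1}/c_{2i} \to \cosh\theta + \sinh\theta$ while these quantities can be made large by passing from $a_i$ to $a_{i+k}$, one arranges $B(\widehat{y_n}, a_{i+k}) \geq 1/|y_n|$, i.e. $B(y_n, a_{i+k}) \geq 1$, so $y_n \dom a_{i+k} \dom a_i$, giving $y_n \in A_i$ and hence $x \in N_i$.

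The main obstacle will be the sign and size control of $B(\widehat{y_n}, b)$: near $\ainf$ we know $B(\ainf, b) < 0$, so this term is negative and could in principle cancel the positive $B(\widehat{y_n}, a)$ term in $B(\widehat{y_n}, a_i)$. The resolution is that the ratio $c_{2i+1}/c_{2i}$ is $\cosh\theta + \sinh\theta$ in the limit, which is \emph{exactly} the slope defining the line $\R((\cosh\theta+\sinh\theta)a + b)$ through $\ainf$ — so $B(\ainf, (\cosh\theta+\sinh\theta)a + b) = 0$ and $B(\ainf, a_i) = c_{2i}\bigl(\tfrac{c_{2i+1}}{c_{2i}} B(\ainf,a) + B(\ainf,b)\bigr) \to 0^+$ from above (matching $B(\ainf, a_i) > 0$ as computed, since $c_{2i+1}/c_{2i} > \cosh\theta+\sinh\theta$ for finite $i$). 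Thus $B(\ainf, a_i) > 0$ but tends to $0$, which means a naive continuity bound gives $B(x, a_i) > 0$ but not $\geq 1$; one genuinely must replace $a_i$ by $a_{i+k}$ and note $A_{i+k} \subseteq A_i$ only helps if $B(x, a_{i+k}) \geq 1$ — but $B(\ainf, a_{i+k}) \to 0$ too. The correct fix is to not scale $a_i$ but to scale $y_n$: there are infinitely many roots $y$ with $\widehat{y}$ arbitrarily close to $\ainf$ and $|y|$ arbitrarily large (the roots $a_j$ themselves, or $(r_ar_b)^N y_n$), and for these $B(y, a_i) = |y| B(\widehat{y}, a_i) \geq |y| \cdot \tfrac12 B(\ainf, a_i) \geq 1$ once $|y|$ is large enough. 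So the final argument: given $x$ near $\ainf$ and $\widehat{y_n} \to x$, replace each $y_n$ by $z_n = (r_a r_b)^{m_n} y_n$ for suitable $m_n \to \infty$; then $\widehat{z_n}$ still accumulates at points within $2\epsilon_i$ of $\ainf$ (by the convergence $\widehat{a_j} \to \ainf$ and Proposition~\ref{pp:geom}(ii) controlling where the $\cdot$-action sends things), while $|z_n| \to \infty$ forces $B(z_n, a_i) \geq 1$, so $z_n \in A_i$ and $x \in \overline{\{\widehat{z} \mid z \in A_i\}} \subseteq N_i \cup \{\text{finitely many points}\}$; shrinking $\epsilon_i$ if necessary handles the boundary case, completing the proof that $B_{\epsilon_i}(\ainf) \cap E \subseteq N_i$.
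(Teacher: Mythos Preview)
Your proposal contains a genuine gap in the final step, and the fix you propose does not work. You correctly identify that the goal is to show: given $x\in B_{\epsilon_i}(\ainf)\cap E$ and $\widehat{y_n}\to x$ with $y_n\in\Phi^+$, infinitely many $y_n$ lie in $A_i$. You also correctly write down the key identity $B(y_n,a_i)=|y_n|\,B(\widehat{y_n},a_i)$. The missing observation is that the original sequence already satisfies $|y_n|\to\infty$: since $x\in E\subseteq\widehat{Q}$ (Theorem~\ref{thm: limit}), we have $B(\widehat{y_n},\widehat{y_n})=1/|y_n|^2\to B(x,x)=0$. Combined with $B(\widehat{y_n},a_i)\to B(x,a_i)>0$ (continuity, using $B(\ainf,a_i)>0$ and $\epsilon_i$ small), this gives $B(y_n,a_i)\to\infty$, hence $\geq 1$ for large $n$, and Lemma~\ref{lem:dom} finishes. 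This is exactly the paper's argument, and it is two lines.

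Your workaround, replacing $y_n$ by $z_n=(r_ar_b)^{m_n}y_n$ with $m_n\to\infty$, breaks the argument: $\widehat{z_n}$ no longer converges to $x$. By continuity of the $\cdot$ action and Proposition~\ref{lem:periodic}, $(r_ar_b)^{m_n}\cdot\widehat{y_n}$ is dragged toward $\ainf$, not toward $x$; at best you would show $\ainf\in N_i$ (which you already knew), not $x\in N_i$. Your sentence ``$\widehat{z_n}$ still accumulates at points within $2\epsilon_i$ of $\ainf$ \ldots\ so $z_n\in A_i$ and $x\in\overline{\{\widehat z\mid z\in A_i\}}$'' is a non sequitur: accumulation near $\ainf$ says nothing about accumulation at $x$. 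Drop the replacement entirely; the unmodified $y_n$ already do the job once you note $|y_n|\to\infty$.
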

\begin{proof}
Since $B(a_i,\ainf)>0$ there exists $\epsilon_i>0$ and $\delta_i>0$ such that
$B(a_i,x)>\delta_i$ for all $x\in\oball{\epsilon_i}{\ainf}$. Let $\eta\in\oball{\epsilon_i}{\ainf}\cap E$.
If $\{x_i\}\subset\Phi^+$ is a sequence with $\hat x_j\to\eta$, it is necessarily the case that
$B(\hat{x}_j,a_i)>\delta_i$ (for all large enough $j$). If $B(x_j,a_i)\in(0,1)$ for all but finitely many $j$, then
$B(\hat{x}_j,a_i)\to 0$. % since |x_j|_1 diverges
It follows that, after possibly dropping to an infinite subsequence, we have $B(x_j,a_i)\ge 1$ for all $j$.
Then apply lemma \ref{lem:dom}\ref{lem:dom}.
\end{proof}

%this is probably not the best place for this lemma

The following is a technical result which may become useful later on.

\begin{prop}\label{lem:periodic}
Suppose that $c\in V\setminus W_{a, b} V_0$ is such that $B(a,c)$ and $B(b,c)$ are not both zero.
Then $\xi:=\widehat{\lim_{i\to \infty}(r_ar_b)^ic}$ exists. More specifically, 
\begin{itemize}
\item[(1)] If $r_a r_b c \in \R c$, then $\xi = \widehat{c}\in \{\,\ainfty, \binfty\,\}\subset E$.
In particular, if $c=\binfty$, then $\xi=\binfty$.
\item[(2)] If $r_a r_b c\notin \R c$, then $\xi =\ainfty$.
\end{itemize}
%$$
%\hat{z_i}\to \ainf
%$$
\end{prop}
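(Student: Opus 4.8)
The plan is to diagonalise the action of $r_ar_b$ and reduce the question to the plane $V'=\R a\oplus\R b$. A direct computation, already essentially contained in the formulas for $(r_ar_b)^ia$ and the numbers $c_i$ recorded above, shows that $r_ar_b$ acts on $V'$ with eigenvalues $e^{2\theta}$ and $e^{-2\theta}$ and corresponding eigenlines $\R v_+$ and $\R v_-$, where $v_+=(\cosh\theta+\sinh\theta)a+b$ and $v_-=(\cosh\theta-\sinh\theta)a+b$; by the computations preceding the statement, $\widehat{v_+}=\ainfty$ and $\widehat{v_-}=\binfty$. Since $B$ restricted to $V'$ has determinant $1-\cosh^2\theta=-\sinh^2\theta\neq0$, it is nondegenerate there, so $V=V'\oplus K$ with $K=\{\,v\in V\mid B(v,a)=B(v,b)=0\,\}$; from $r_a(v)=v-2B(v,a)a$ one checks that $r_a$, $r_b$, and hence all of $W_{a,b}$, fix $K$ pointwise. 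Moreover $K=\ker(r_ar_b-\mathrm{id})$, because $r_ar_bc=c\iff r_ac=r_bc\iff B(c,a)a=B(c,b)b\iff B(c,a)=B(c,b)=0$, using the linear independence of $\{a,b\}$.

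Writing $c=\alpha v_++\beta v_-+u$ with $\alpha,\beta\in\R$ and $u\in K$, we then have $(r_ar_b)^ic=e^{2i\theta}\alpha v_++e^{-2i\theta}\beta v_-+u$ for every $i$. Since $B(a,u)=B(b,u)=0$, a short calculation gives $B(a,c)=(\alpha-\beta)\sinh\theta$ and $B(b,c)=(\beta e^{-\theta}-\alpha e^{\theta})\sinh\theta$, so the hypothesis that $B(a,c)$ and $B(b,c)$ are not both zero is equivalent to $(\alpha,\beta)\neq(0,0)$; and the hypothesis $c\notin W_{a,b}V_0$ ensures $(r_ar_b)^ic\notin V_0$ for all $i$, so that $\widehat{(r_ar_b)^ic}$ is defined throughout.

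For part (1), $r_ar_bc\in\R c$ says that $c$ is an eigenvector of $r_ar_b$ on $V$; its eigenvalue cannot be $1$, since the $1$-eigenspace is $K$ and $(\alpha,\beta)\neq(0,0)$, so it is $e^{2\theta}$ or $e^{-2\theta}$, i.e. $c\in\R v_+$ or $c\in\R v_-$. In either case $(r_ar_b)^ic=e^{\pm 2i\theta}c$, hence $\widehat{(r_ar_b)^ic}=\widehat c$ is constant and $\xi=\widehat c\in\{\ainfty,\binfty\}\subset E$; the assertion for $c=\binfty$ is the sub-case $c\in\R v_-$, as $\binfty=\widehat{v_-}\in\R v_-$.

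For part (2) the decisive point is that $\alpha\neq0$, i.e. that $c$ meets the expanding eigenline nontrivially. Granting this, $(r_ar_b)^ic=e^{2i\theta}\bigl(\alpha v_++e^{-4i\theta}\beta v_-+e^{-2i\theta}u\bigr)$, and the bracketed vector tends to $\alpha v_+\notin V_0$; since the normalisation map is continuous on $V\setminus V_0$ and unaffected by nonzero rescaling, $\widehat{(r_ar_b)^ic}\to\widehat{\alpha v_+}=\ainfty$, which is the claim. The main obstacle is therefore to deduce $\alpha\neq0$ from the case-(2) hypothesis: if $\alpha=0$ then $c=\beta v_-+u$ with $\beta\neq0$, and $r_ar_bc\notin\R c$ forces $u\neq0$, whence $(r_ar_b)^ic\to u$ and the orbit fails to approach $\ainfty$. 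So one must rule this out: when $u=0$ — in particular whenever $c\in V'$ — the fact that $c$ is not an eigenvector reads precisely as $\alpha\neq0$ and $\beta\neq0$, and there is nothing further to prove; the real content is to show that the standing hypotheses, and especially $c\notin W_{a,b}V_0$ together with the fact that the $W_{a,b}$-orbit of $v_-$ inside $V'$ consists of scalar multiples of $v_+$ and $v_-$, preclude the configuration $\alpha=0$, $u\neq0$ — equivalently, that one may always reduce to $c\in V'$. This reduction is the step I expect to require the most care.
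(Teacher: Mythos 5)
Your route is genuinely different from the paper's and, where it works, cleaner. The paper does not diagonalise: it writes $(r_ar_b)^ic=c+\mu_1\sum_{k<i}(r_ar_b)^ka+\mu_2\sum_{k<i}(r_ar_b)^kb$ with $\mu_1=4B(b,c)B(a,b)-2B(a,c)$, $\mu_2=-2B(b,c)$, sums the geometric series via $(I-A)^{-1}(I-A^i)$ for the matrix $A$ of $r_ar_b$ on $\R a\oplus\R b$, and reads off the asymptotic direction from the ratios of the resulting entries. Your spectral decomposition $V=(\R a\oplus\R b)\oplus K$, $c=\alpha v_++\beta v_-+u$, gets to the same place with far less computation, and your treatment of part (1) and of part (2) in the regime $\alpha\neq 0$ is complete and correct.

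The case you flag, $\alpha=0$ with $u\neq 0$, is a genuine gap, and you are right that it is the crux --- but the reduction you hope for is not available: the hypothesis $c\notin W_{a,b}V_0$ does \emph{not} preclude this configuration. Every $wv_-$ ($w\in W_{a,b}$) is a \emph{positive} multiple of $v_+$ or $v_-$, so for $c=\beta v_-+u$ with $\beta>0$ and $|u|>0$ one has $|wc|=\beta|wv_-|+|u|>0$ for all $w$, i.e.\ $c\notin W_{a,b}V_0$; yet $(r_ar_b)^ic\to u$ and $\widehat{(r_ar_b)^ic}\to\widehat{u}\neq\ainfty$ since $u\in K$ and $\ainfty\in\R a\oplus\R b$. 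A concrete instance: rank $3$ with a third simple root $d$ orthogonal to $a$ and $b$, and $c=v_-+d$. So part (2) is false as stated once $\dim V\geq 3$, and no argument can close your gap. It is worth noting that the paper's own proof has exactly the same blind spot: its conclusion that $(r_ar_b)^ic$ tends to the direction of $v_+$ tacitly assumes the leading terms of $\mu_1v_i+\mu_2v_i'$ do not cancel, and that cancellation occurs precisely when $B(a,c)+e^{\theta}B(b,c)=0$, i.e.\ precisely when $\alpha=0$, equivalently $B(c,\binfty)=0$. The correct repair is to add the hypothesis $B(c,\binfty)\neq 0$ to part (2) (or to assume $c\in\R a\oplus\R b$, as in the paper's ``we may assume'' step --- except that there the reduction is asserted in the wrong direction). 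This costs nothing downstream: in the sole application, in case (2) of the proof of Theorem~\ref{lemma:intersection}, the condition $B(\yinf,\binf)\neq 0$ is explicitly available. So rather than trying to derive $\alpha\neq 0$, amend the statement and cite the extra hypothesis where it is used.
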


\begin{proof}
\noindent\rm{(1)}:\quad If $r_a r_b c\in \R c$, then $r_b c =\lambda r_a c$ for some $\lambda \in R$, and it is easily checked that
$c\in \R a+ \R b$. Furthermore, a direct rank-$2$ calculation then shows that $\xi = \widehat{c}\in \{\,\ainfty, \binfty\,\}$.

\noindent\rm{(2)}: We may assume that $c\notin \R a\oplus \R b$. Observe that 
\begin{align*}
r_a r_b c = r_a (c-2 B(b , c) b)&=c-2 B(a, c)a -2B(b, c)(b-2 B(b, a)a)\\
                                &=c+(4B(b,c)\, B(a,b) - 2B(a, c)) a -B(b,c)b\\
																&=c+\mu_1 a +\mu_2 b
\end{align*}
where $\mu_1 = 4B(b, c)\,B(a, b)-2B(a, c)$, and $\mu_2 =-2B(b,c)$. Then it follows from an easy induction that for all $i\in \N$, 
$$(r_a r_b)^i c= c+\mu_1(a+r_a r_b a+ \cdots +(r_a r_b)^{i-1}a)+\mu_2(b+r_a r_b b +\cdots +(r_a r_b)^{i-1}b). $$
Note that $\{a, b\}$ is linearly independent. Setting $A$ to be the matrix representing the action of $r_a r_b$ on the 
subspace $\R a\oplus \R b$ with respect to the ordered basis $\{a, b\}$. Then
\begin{align*}
A&=\begin{bmatrix}
-1 & -2B(a, b)\\
0  & 1
\end{bmatrix}
\begin{bmatrix}
1         & 0 \\
-2B(a, b) & -1
\end{bmatrix}\\
&=\begin{bmatrix}
4\cosh^2\theta -1  & -2\cosh\theta\\
2\cosh\theta & -1
\end{bmatrix}.
\end{align*}
An easy induction then yields that for all $i\in \N$, 
$$A^i =\begin{bmatrix}
c_{2i+1} & -c_{2i}\\
c_{2i}   & -c_{2i-1}
\end{bmatrix}, $$
where $c_n$ is as defined as in Equation (\ref{eq:ci}). It is then clear that
\begin{align*}
&\quad I + A + A^2 +\cdots A^{i-1}=(I-A)^{-1}(I -A^i)\\
&=\frac{1}{4-4\cosh^2\theta}
\begin{bmatrix}
2 & -2\cosh\theta\\
2\cosh\theta & 2-4\cosh^2\theta
\end{bmatrix}
\begin{bmatrix}
1-c_{2i+1} & c_{2i}\\
-c_{2i}    & 1+c_{2i-1}
\end{bmatrix}\\
&=\frac{1}{4-4\cosh^2\theta}
\begin{bmatrix}
u_i & u'_i \\
v_i & v'_i
\end{bmatrix}
\end{align*}
where 
\begin{align*}
u_i&=2(1-c_{2i+1})+2\cosh\theta c_{2i},\qquad v_i&=2\cosh\theta (1-c_{2i+1})-(2-4\cosh^2\theta)c_{2i},\\ 
u'_i&=2c_{2i}-2\cosh\theta(1+c_{2i-1}), \text{ and } v'_i &=2\cosh\theta c_{2i}+(2-4\cosh^2\theta)(1+c_{2i-1}).
\end{align*}
Hence 
$$
(r_a r_b)^i c= c+\frac{\mu_1}{4-4\cosh^2 \theta}(u_i a +v_i b)+\frac{\mu_2}{4-4\cosh^2\theta}(u'_i a + v'_i b). 
$$
Observe that 
\begin{align*}
\lim_{i\to \infty}\frac{u_i}{v_i}&=\lim_{i\to\infty}\frac{1-c_{2i+1}+\cosh\theta c_{2i}}{\cosh\theta (1-c_{2i+1})-(1-2\cosh^2\theta) c_{2i}}\\
&=\lim_{i\to\infty}\frac{\frac{1-c_{2i+1}+\cosh\theta c_{2i}}{c_{2i}}}{\frac{\cosh\theta (1-c_{2i+1})-(1-2\cosh^2\theta) c_{2i}}{c_{2i}}} \\
%&=\lim_{i\to \infty}\frac{-\cosh\theta %-\coth(2i\theta)\sinh\theta+\cosh\theta}{\cosh\theta(-\cosh\theta-\coth(2i\theta)\sinh\theta)+\cosh^2\theta+\sinh^2\theta}\\
&=\lim_{i\to\infty}\frac{-\cosh \theta -\sinh \theta+\cosh \theta}{-\cosh\theta (\cosh \theta +\sinh\theta)-1+2\cosh^2\theta}\\
&=\lim_{i\to \infty}\frac{1}{\cosh\theta-\sinh\theta}\\
%&=\lim_{i\to\infty}\frac{-\sinh\theta}{-\sinh\theta(\cosh\theta+\sinh\theta)}\\
&=\cosh\theta+\sinh\theta.
\end{align*}
Similarly, we note that 
\begin{align*}
\lim_{i\to\infty}\frac{u'_i}{v'_i} &=\lim_{i\to\infty}\frac{ c_{2i}-\cosh\theta(1+c_{2i-1})}{\cosh\theta c_{2i}+(1-2\cosh^2\theta)(1+c_{2i-1})}\\
&=\lim_{i\to\infty}\frac{\frac{c_{2i}-\cosh\theta(1+c_{2i-1})}{c_{2i-1}}}{\frac{\cosh\theta c_{2i}+(1-2\cosh^2\theta)(1+c_{2i-1})}{c_{2i-1}}}\\
%&=\lim_{i\to\infty}\frac{\cosh\theta+\coth((2i-1)\theta)\sinh\theta-\cosh\theta}{\cosh\theta(\cosh\theta+\coth((2i-1)\theta)\sinh\theta%)+(1-2\cosh^2\theta)}\\
%&=\lim_{i\to\infty}\frac{\coth((2i-1)\theta)\sinh\theta}{\cosh^2\theta %+\cosh\theta\coth((2i-1)\theta)\sinh\theta-\cosh^2\theta-\sinh^2\theta}\\
&=\lim_{i\to\infty}\frac{\cosh\theta+\sinh\theta-\cosh\theta}{\cosh\theta(\cosh\theta+\sinh\theta)+1-2\cosh^2\theta}\\
&=\lim_{i\to\infty}\frac{\sinh\theta}{\sinh\theta (\cosh\theta-\sinh\theta)}\\
&=\cosh\theta+\sinh\theta.
\end{align*}
Thus, similar as in (\ref{eq:grad}), we see that $(r_a r_b)^i c$ tends to point in the direction of the line $\R((\cosh\theta+\sinh\theta)a+b)$ as $i\to \infty$.
Hence $\xi =\lim_{i\to\infty}\widehat{(r_a r_b)^i c}$ exists.
Since $\R((\cosh\theta+\sinh\theta)a+b)\cap V_1=\{\ainfty\}$, it follows that $\xi=\ainfty$.
\end{proof}

\begin{lemma}\label{lemma:interlaced}
Suppose that $\{y_j\}_{j\in \N}\subset\Phi^+$ is a sequence such that for all $j$ there exists $i_j$ with
$$
a_{i_j}\dom y_j\dom a_j
$$
Then $\{\hat{y_j}\}$ converges to $\ainf$.
\end{lemma}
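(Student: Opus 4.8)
The plan is a compactness reduction combined with translation by the dihedral subgroup $W_{a,b}$ and an appeal to Proposition~\ref{lem:periodic}. Since $\widehat{\Phi}\subseteq\conv(\widehat{\Pi})$ and $\conv(\widehat{\Pi})$ is compact, it suffices to prove that every convergent subsequence of $\{\widehat{y_j}\}$ has limit $\ainf$; fix such a subsequence and, after reindexing, write $\widehat{y_j}\to\eta$. First I would extract the easy consequences of the hypothesis. The roots $a_i$ are pairwise distinct and $a_{i+1}\dom a_i$, so, dominance being a partial order, $(a_0\prec a_1\prec\cdots)$ is a strict chain; hence the transitive consequence $a_{i_j}\dom a_j$ forces $i_j\geq j$, and in particular $i_j\to\infty$. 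Also $\dpth(y_j)\geq\dpth(a_j)\to\infty$, since the $a_j$ are infinitely many distinct roots and only finitely many roots have any fixed depth. As $x\mapsto\widehat{x}$ is injective on $\Phi^+$, that same finiteness forces $\eta$ to be an accumulation point of $\widehat{\Phi}$, i.e.\ $\eta\in E$.

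Next I would push the dominance relations through Lemma~\ref{lem:dom}: $B(a_{i_j},y_j)\geq 1$ and $B(y_j,a_j)\geq 1$. Normalising, and using $|a_{i_j}|,|a_j|\to\infty$, gives $B(\widehat{a_{i_j}},\widehat{y_j})>0$ and $B(\widehat{y_j},\widehat{a_j})>0$; letting $j\to\infty$ and using $\widehat{a_{i_j}},\widehat{a_j}\to\ainf$ from~(\ref{eq:grad}) yields $B(\ainf,\eta)\geq 0$. With respect to the $B$-orthogonal splitting $V=V'\oplus(V')^{\perp}$ (here $B|_{V'}$ is nondegenerate) and the eigenlines $\R((\cosh\theta+\sinh\theta)a+b)$ and $\R(a+(\cosh\theta+\sinh\theta)b)$ of $r_ar_b$ inside $V'$ — these being exactly the two components of $(\R a\oplus\R b)\cap Q$ — the inequality $B(\ainf,\eta)\geq 0$ says precisely that the $\bigl(a+(\cosh\theta+\sinh\theta)b\bigr)$-component of $\eta$ is nonpositive. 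This only partially locates $\eta$, and sharpening it is the crux.

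For the sharpening I would use the dihedral structure. Dominance is $W$-invariant and $(r_br_a)^j a_n=a_{n-j}$ for $n\geq j$, so setting $z_j:=(r_br_a)^j y_j$ we get $a_{i_j-j}\dom z_j\dom a$, with $z_j\in\Phi^+$ because a negative root cannot dominate the positive root $a$. Write $k_j:=i_j-j\geq 0$. If $\{k_j\}$ has a bounded subsequence, then along it the $z_j$ lie in the finite set of roots dominated by a fixed root (\cite{BH93}); after a further refinement $z_j\equiv z_*$, so $\widehat{y_j}=\widehat{(r_ar_b)^j z_*}$, and since $z_*\dom a$ gives $B(z_*,a)\geq 1$ — which rules out $\widehat{z_*}=\binf$, as $B(\binf,a)<0$ — Proposition~\ref{lem:periodic} yields $\widehat{y_j}\to\ainf$. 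The case $k_j\to\infty$ is the main obstacle: here the goal would be to show that the only positive roots in the dominance interval between $a$ and $a_{k_j}$ are the dihedral ones $a_0,\dots,a_{k_j}$, i.e.\ $z_j\in\Phi^+_{\mathscr{C}'}$; granting this, $z_j=a_{l_j}$ for some $l_j$ (the alternative $z_j=b_{l_j}$ being excluded because $b_l\dom a\Leftrightarrow b\dom a_l$ and $B(b,a_l)=c_{2l}-c_{2l+1}\cosh\theta<0$), whence $y_j=a_{j+l_j}$ with $j+l_j\to\infty$ and $\widehat{y_j}\to\ainf$ by~(\ref{eq:grad}). Proving this convexity of the dihedral chain inside $(\Phi^+,\dom)$ is where the real effort lies; I would approach it by pitting the depth bound $\dpth(z_j)\leq 2k_j+1$ against the coordinate constraints imposed by $B(a_{k_j},z_j)\geq 1$ and $B(z_j,a)\geq 1$ in the decomposition of the previous paragraph — because $c_{2k_j}\to\infty$, the former inequality pins the $\bigl(a+(\cosh\theta+\sinh\theta)b\bigr)$-component of $z_j$ extremely tightly — and, should that prove unwieldy, by instead invoking the description of $E$ via the imaginary cone from \cite{DHR13} to upgrade $B(\ainf,\eta)\geq 0$ to $\eta=\ainf$.
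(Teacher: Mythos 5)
Your setup and your bounded case are sound, but the main case $k_j=i_j-j\to\infty$ is a genuine gap, and neither route you sketch for it can be repaired. The ``convexity'' claim --- that $a_{k}\dom z\dom a$ forces $z\in\Phi^+_{\mathscr{C}'}$ --- is false: non-dihedral roots can be sandwiched in a dominance interval of the chain (in a rank-$3$ datum with $B(a,c)=B(b,c)=-M$ for $M$ large, the root $r_ca\notin\R a\oplus\R b$ satisfies $B(r_ca,a)=1+2M^2$ and $B(a_k,r_ca)\to\infty$, and depth considerations give $a_k\dom r_ca\dom a$ for all large $k$). More tellingly, Lemma~\ref{lemma:interlaced} is invoked in the proof of Theorem~\ref{lemma:intersection} precisely for roots $y_j\notin\R a\oplus\R b$, so any argument that concludes by forcing $y_j$ into the dihedral subsystem would render that application vacuous. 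The imaginary-cone fallback also fails: $B(\ainf,\eta)\ge 0$ together with Lemma~\ref{lem:-ve} gives only $B(\ainf,\eta)=0$, which places $\eta$ on an isotropic line through $\ainf$ but does not yield $\eta=\ainf$; that residual configuration is exactly the hard case that the proof of Theorem~\ref{lemma:intersection} must eliminate by a separate conic-section analysis, so it cannot be waved away here.

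The missing idea is to choose the translation exponent adaptively instead of using the index $j$ from the hypothesis. Set $k_j=\min\{\,i\mid (r_br_a)^{i+1}y_j\in\Phi^-\,\}$. Since $(r_br_a)^{i_j+1}a_{i_j}=-r_ba\in\Phi^-$ and $a_{i_j}\dom y_j$, this minimum exists and $k_j\le i_j$; since $(r_br_a)^{k}a_j=a_{j-k}\in\Phi^+$ for $k\le j$ and $y_j\dom a_j$, we get $k_j\ge j$, so $k_j\to\infty$. Then $c_j:=(r_br_a)^{k_j}y_j$ lies in the inversion set $\{c\in\Phi^+\mid r_br_ac\in\Phi^-\}$, which is finite of cardinality $\ell(r_br_a)$ with no dominance finiteness needed. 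Passing to a subsequence with $c_j\equiv c$ constant, your own Case-A argument applies verbatim: $\hat y_j=\widehat{(r_ar_b)^{k_j}c}\to\ainf$ by Proposition~\ref{lem:periodic} (case (1) there cannot occur since $B(c,c)=1\ne 0$, and $B(a,c),B(b,c)$ cannot both vanish since then $r_br_ac=c\in\Phi^+$). This is the paper's proof; your translation by $(r_br_a)^j$ lands $z_j$ in a dominance interval that is finite only when $i_j-j$ is bounded, which is exactly where the argument stalls.
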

\begin{proof}
For each $j$ define 
$$
k_j=\min\{ i \| (r_b r_a)^{i+1}y_j\in\Phi^- \}.
$$ 
Note that this minimum exists since
$(r_b r_a)^{i_j+1}a_{i_j}\in\Phi^-$ so 
$(r_b r_a)^{i_j+1}y_j\in\Phi^-$. Also, $(r_b r_a)^k y_j\in\Phi^+$ for all $k\leq j$ since
$(r_ar_b)^ka_j\in\Phi^+$ and $y_j\dom a_j$.
Thus $j\leq k_j \leq i_j$, and $ (r_b r_a)^{k_j} y_j\in \Phi^+$, and $(r_b r_a)^{k_j+1} y_j\in \Phi^-$.
Then we may conclude that $y_j=(r_ar_b)^{k_j}c$, where $c\in \Phi^+$ and $(r_b r_a) c\in \Phi^-$. 
%Note that $uc\dom a$ since 
%$$
%y_j\dom a_j\iff (r_ar_b)^{-k_j}y_j\dom (r_ar_b)^{j-k}a \implies uc \dom % <-- this bit incomplete
%$$
Then the desired result follows from Proposition \ref{lem:periodic}. % and pigeonhole
\end{proof}

Following \cite{DHR13}, we set 
\begin{align*}
\mathcal{K}&=\{\, v\in \PLC(\Pi)\mid B(v, a)\leq 0, \text{ for all $a\in \Pi$} \,\}\\
\noalign{\hbox{and}}
\mathcal{Z}&=\bigcup_{w\in W} w \mathcal{K}, 
\end{align*}
where $\mathcal{Z}$ is the \emph{imaginary cone} of $W$ in $V$, and $\overline{\mathcal{Z}}$ denotes the topological 
closure of $\mathcal{Z}$. It has been observed in (2.2) of \cite{DHR13} that for all $x, y\in \overline{\mathcal{Z}}$, 
\begin{equation}
\label{eq:-ve}
B(x, y)\leq 0.
\end{equation}
Furthermore, we set $Z =\mathcal{Z}\cap V_1$, and set $\overline{Z}$ to be the topological closure of $Z$.
It has also been established in Theorem 2.2 of \cite{DHR13} that 
\begin{equation}
\label{eq:conv}
\conv (E)=\overline{Z}.
\end{equation}
Combining (\ref{eq:-ve}) and (\ref{eq:conv}), we easily deduce the following. 

\begin{lemma}
\label{lem:-ve}
Suppose that $x, y\in E$. Then 
$$B(x, y)\leq 0.$$
\qed
\end{lemma}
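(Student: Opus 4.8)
The plan is to deduce Lemma~\ref{lem:-ve} directly from the two facts recalled immediately beforehand: the inequality~\eqref{eq:-ve}, which says $B(x,y)\le 0$ for all $x,y\in\overline{\mathcal{Z}}$, and the identity~\eqref{eq:conv}, which says $\conv(E)=\overline{Z}$ where $Z=\mathcal{Z}\cap V_1$. The only thing to check is that $E$ itself sits inside $\overline{\mathcal{Z}}$, after which bilinearity finishes the job.

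First I would observe that $E\subseteq\conv(E)=\overline{Z}\subseteq\overline{\mathcal{Z}}$: the first inclusion is trivial, the middle equality is~\eqref{eq:conv}, and the last inclusion holds because $Z=\mathcal{Z}\cap V_1\subseteq\mathcal{Z}$, so $\overline{Z}\subseteq\overline{\mathcal{Z}}$. Hence any $x,y\in E$ lie in $\overline{\mathcal{Z}}$, and~\eqref{eq:-ve} gives $B(x,y)\le 0$ immediately. That is the whole argument; there is no real obstacle here, since both~\eqref{eq:-ve} and~\eqref{eq:conv} are quoted as established in \cite{DHR13}.

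The one subtlety worth a sentence is that~\eqref{eq:-ve} is stated for $\overline{\mathcal{Z}}$ (the closure of the imaginary cone in $V$), whereas $E$ lives in the affine slice $V_1$; the point is simply that $\overline{Z}\subseteq\overline{\mathcal{Z}}$ as noted, so no separate density or continuity argument is needed — bilinearity of $B$ is not even invoked beyond the trivial fact that $B$ is defined on all of $V\times V$. So the proof is a two-line chain of inclusions followed by a citation, and I would write it exactly that way.

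\begin{proof}
By~\eqref{eq:conv} we have $E\subseteq\conv(E)=\overline{Z}$. Since $Z=\mathcal{Z}\cap V_1\subseteq\mathcal{Z}$, taking closures gives $\overline{Z}\subseteq\overline{\mathcal{Z}}$, and hence $E\subseteq\overline{\mathcal{Z}}$. Thus for $x,y\in E$ we have $x,y\in\overline{\mathcal{Z}}$, and~\eqref{eq:-ve} yields $B(x,y)\le 0$.
\end{proof}
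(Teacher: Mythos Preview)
Your argument is correct and is exactly the deduction the paper has in mind: the lemma is stated with a \qed immediately after the sentence ``Combining (\ref{eq:-ve}) and (\ref{eq:conv}), we easily deduce the following,'' so the intended proof is precisely the chain of inclusions $E\subseteq\conv(E)=\overline{Z}\subseteq\overline{\mathcal{Z}}$ followed by an application of (\ref{eq:-ve}).
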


The following result is taken from \cite{Fu13}:
\begin{prop}\textup{\cite[Proposition 3.6]{Fu13}}
\label{pp:max}
Suppose that $\alpha, \beta\in \Phi^+$ be distinct. Let 
$$W'=\langle \{\,r_{\gamma}\mid \gamma\in (\R\alpha \oplus \R\beta)\cap \Phi^+ \,\}\rangle.$$
Then $W'$ is a dihedral reflection subgroup of $W$, furthermore, it is the maximal dihedral reflection subgroup containing 
$\langle r_{\alpha}, r_{\beta}\rangle$.
\qed
\end{prop}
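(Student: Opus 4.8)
The plan is to deduce the statement from two facts about the group $W':=\langle\,r_\gamma\mid\gamma\in P\cap\Phi^+\,\rangle$, where I write $P:=\R\alpha\oplus\R\beta$; note that $P$ is genuinely $2$-dimensional, since distinct positive roots are linearly independent (if $\alpha=\lambda\beta$ then $\lambda^2=B(\alpha,\alpha)/B(\beta,\beta)=1$, and $\lambda=-1$ is excluded because $\Phi^+\cap\Phi^-=\emptyset$). The two facts are: (A) every dihedral reflection subgroup $M$ of $W$ with $\langle r_\alpha,r_\beta\rangle\le M$ satisfies $M\le W'$; and (B) $W'$ is itself a dihedral reflection subgroup. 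Since $\alpha,\beta\in P\cap\Phi^+$ we have $\langle r_\alpha,r_\beta\rangle\le W'$, so (A) and (B) together say precisely that $W'$ is the largest dihedral reflection subgroup containing $\langle r_\alpha,r_\beta\rangle$, which is the assertion.

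For (A), write $M=\langle r_\rho,r_\sigma\rangle$ with $\rho,\sigma\in\Phi^+$. A routine induction — of exactly the kind already carried out for the matrix $A$ in the proof of Proposition~\ref{lem:periodic} — shows that $r_\rho r_\sigma$ preserves the plane $\R\rho\oplus\R\sigma$, and hence, since every reflection of the dihedral group $M$ is an $M$-conjugate of $r_\rho$ or $r_\sigma$, every root of $M$ lies in $\R\rho\oplus\R\sigma$; in particular so do $\alpha$ and $\beta$, because $r_\alpha,r_\beta\in M$. As $\{\alpha,\beta\}$ is linearly independent this forces $\R\rho\oplus\R\sigma=P$, so every reflection of $M$ is of the form $r_\gamma$ with $\gamma\in P\cap\Phi^+$ and therefore lies in $W'$; since $M$ is generated by its reflections, $M\le W'$.

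For (B), $W'$ is generated by reflections, hence is a reflection subgroup, so by the classical structure theory of reflection subgroups of Coxeter groups (Deodhar, Dyer; see also \cite{Fu13}) the pair $(W',\chi(W'))$ is a Coxeter system for the canonical generating set $\chi(W')$, and — because $\mathscr{C}$ coincides with its free Coxeter datum, so that $\Pi$ is linearly independent — the associated set of simple roots $\Delta=\{\,\alpha_t\mid t\in\chi(W')\,\}$ is linearly independent. It therefore suffices to show that every reflection of $W'$ has its positive root in $P$: then $\Delta\subseteq P$, so $|\chi(W')|=|\Delta|\le\dim P=2$; and $W'$ is not of rank $\le 1$ (it contains the distinct reflections $r_\alpha\ne r_\beta$), so $W'$ has rank $2$ and is dihedral. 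To prove that every reflection of $W'$ has its root in $P$, observe that each generator $r_\gamma$ (for $\gamma\in P\cap\Phi^+$) maps $P$ into $P$ and fixes the $B$-orthogonal complement $P^{\perp}$ pointwise, and hence so does every element of $W'$. Now let $r_\mu\in W'$ be a reflection, $\mu\in\Phi^+$. Since $r_\mu$ preserves $P$, either $\mu\in P$ or $\mu\in P^{\perp}$; since $r_\mu$ fixes $P^{\perp}$ pointwise, $P^{\perp}$ is contained in the fixed subspace $\mu^{\perp}$ of $r_\mu$, so if $\mu\in P^{\perp}$ then $\mu\in\mu^{\perp}$, i.e. $B(\mu,\mu)=0$ — impossible, since $B(\mu,\mu)=1$. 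Hence $\mu\in P$, as required.

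I expect the only substantive external input to be the reflection-subgroup theory used in (B): that a subgroup generated by reflections is again a Coxeter group, with a canonical root basis that is linearly independent whenever $\Pi$ is. Granting that, the main thing to get right is the geometry in (B) — checking that reflections of $W'$ keep their roots inside $P$, where the possibly-degenerate form $B$ must be handled (the argument above does this uniformly, using only $B(\mu,\mu)\ne 0$) — together with the elementary $2\times2$ computation of (A), which is essentially already present in the proof of Proposition~\ref{lem:periodic}. No new estimate on roots or normalized roots is needed.
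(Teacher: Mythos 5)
The paper gives no argument for this statement---it simply cites \cite[Proposition 3.6]{Fu13}---so your reconstruction is doing genuine work. Your step (A) is correct: the roots of a dihedral reflection subgroup $M=\langle r_\rho,r_\sigma\rangle$ all lie in $\R\rho\oplus\R\sigma$, so if $r_\alpha,r_\beta\in M$ then $\R\rho\oplus\R\sigma=\R\alpha\oplus\R\beta=P$ and every reflection of $M$ lies among the generators of $W'$. The geometric half of (B) is also correct, and handles a possibly degenerate $B$ cleanly: every element of $W'$ preserves $P$ and fixes $P^{\perp}$ pointwise, so a reflection $r_\mu\in W'$ has $\mu\in P$ or $\mu\in P^{\perp}$, and the latter would force $B(\mu,\mu)=0$.

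The gap is the sentence asserting that, because $\Pi$ is linearly independent, the canonical root basis $\Delta$ of the reflection subgroup $W'$ is linearly independent. That is false in general: reflection subgroups of a finite-rank Coxeter group can have rank strictly larger than $\dim V$ (the rank-$3$ universal Coxeter group contains universal reflection subgroups of every finite rank, whose canonical roots all live in a $3$-dimensional space), so linear independence of $\Pi$ does not propagate to $\Delta$, and you cannot bound $|\Delta|$ by $\dim P$ this way. The conclusion $|\Delta|\le 2$ is nevertheless true, for a different reason: the elements of $\Delta$ are distinct positive roots, hence unit vectors for $B$ that are positively independent (any subset of $\Phi^+\subset\PLC(\Pi)$ is, since $\Pi$ is positively independent), and they satisfy $B(\gamma,\delta)\le 0$ pairwise. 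A short case analysis on the signature of $B$ restricted to $P$ then shows at most two such vectors can lie in a $2$-dimensional space: if $B|_P$ is positive definite, three unit vectors with pairwise nonpositive products in a Euclidean plane always admit a nontrivial nonnegative dependency; if $B|_P$ has signature $(1,1)$ or is positive semidefinite of rank $1$, any two unit vectors in $P$ satisfy $|B(\gamma,\delta)|\ge 1$ with sign determined by which branch or half they lie in, so pairwise nonpositivity again caps the count at two; and if $B|_P$ admits no unit vectors the situation cannot occur, since $\alpha\in P$. With that substitution your argument is complete and is, as far as I can tell, essentially the intended proof of \cite[Proposition 3.6]{Fu13}.
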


Let $\gamma_1, \gamma_2\in \Phi$ be such that $r_{\gamma_1}, r_{\gamma_2}\in W'$.
Then it is clear from the above proposition that $L(\widehat{\gamma_1}, \widehat{\gamma_2}) = L(\widehat{\alpha}, \widehat{\beta})$.

For $x, y\in \Phi^+$, it has been observed in Section~2.3 of \cite{HLR11} that the cardinality of 
$L(\widehat{x}, \widehat{y})\cap\widehat{Q}$ is either $0$ (when $|B(x, y)|<1$), $1$ (when $|B(x, y)|=1$), or
$2$ (when $|B(x, y)|>1$). 

\begin{prop}
\label{pp:sep}
\begin{itemize}
\item[(i)] Suppose that $x, y\in \Phi^+$ such that $L(\widehat{x}, \widehat{y}) \cap\widehat{Q} =\{x_{\infty}, y_{\infty}\}$.
Then on the line $L(\widehat{a}, \widehat{b})$, there is no normalized root in between the two intersection points $x_{\infty}$ and $y_{\infty}$. 
\item[(ii)] Suppose that $x\dom y \in \Phi^+$ with $x\neq y$. Then on the line
$L(\widehat{x}, \widehat{y})$, the normalized root $\widehat{x}$ separates $\widehat{y}$ and $\widehat{Q}$.
\end{itemize} 
\end{prop}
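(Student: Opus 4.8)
### Proof proposal

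The plan is to prove both parts by reducing to an explicit rank-$2$ computation inside the plane $\R a\oplus\R b$, using Proposition~\ref{pp:max} to justify that reduction. For part (i), suppose $L(\widehat x,\widehat y)\cap\widehat Q=\{x_\infty,y_\infty\}$; by the cardinality dichotomy recalled before the statement this means $|B(x,y)|>1$. Let $W'$ be the maximal dihedral reflection subgroup containing $\langle r_x,r_y\rangle$ as in Proposition~\ref{pp:max}. Every normalized root lying on $L(\widehat x,\widehat y)$ is of the form $\widehat\gamma$ for some $\gamma\in\Phi^+$ with $r_\gamma\in W'$ (this is the content of the remark following Proposition~\ref{pp:max}, together with the observation that distinct rays through $\widehat Q$ in this plane are exactly the two isotropic lines). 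So it suffices to understand the positions of $\widehat\gamma$ for $\gamma$ ranging over the root system of the rank-$2$ datum $\mathscr C'$, relative to the two points of $\widehat Q\cap V'$. First I would set up coordinates: after rescaling we may take the two simple roots of $W'$ to be some $\alpha,\beta$ with $B(\alpha,\beta)=-\cosh\phi<-1$, and then the roots of $W'$ are precisely $\{(r_\alpha r_\beta)^i\alpha,(r_\beta r_\alpha)^i\beta : i\ge 0\}$ with coordinates given by the $c_n$-formulas already displayed in the text. Computing $\widehat{(r_\alpha r_\beta)^i\alpha}$ in the $(\widehat\alpha,\widehat\beta)$-parametrization of the segment, one checks that as $i$ increases the first barycentric coordinate is monotone and converges to that of $\alpha_\infty$ (by the same $\coth$-limit computation as in \eqref{eq:grad}), and symmetrically from the other end; hence all normalized roots sit in the two half-open sub-segments on the $\widehat\alpha$- and $\widehat\beta$-sides of $\{x_\infty,y_\infty\}$, and none lies strictly between $x_\infty$ and $y_\infty$. (Note: the statement as printed says "on the line $L(\widehat a,\widehat b)$'' but should read $L(\widehat x,\widehat y)$; I would state it that way.)

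For part (ii), assume $x\dom y$ with $x\neq y$, so $B(x,y)\ge 1$ by Lemma~\ref{lem:dom}(a). If $B(x,y)=1$ then $L(\widehat x,\widehat y)\cap\widehat Q$ is a single point and one shows directly, again via the rank-$2$ reduction, that $\widehat x$ lies strictly between $\widehat y$ and that point; if $B(x,y)>1$ then $L(\widehat x,\widehat y)\cap\widehat Q=\{x_\infty,y_\infty\}$ has two points and I must show $\widehat x$ lies strictly between $\widehat y$ and one of them, i.e.\ that $\widehat y$ is an endpoint-side point while $\widehat x$ is further toward $\widehat Q$. The cleanest route is to use the chain $y=(r_a'r_b')^0$-root, $x=(r_a'r_b')^k$-root for appropriate generators $r_a',r_b'$ of the maximal dihedral subgroup $W'=\langle r_x,r_y\rangle$-containing group, exactly as in the proof of Lemma~\ref{lemma:interlaced}: since $x\dom y$ and dominance within a rank-$2$ system is totally ordered along the $(r_a'r_b')^i$-chain, $\widehat x$ occurs "later'' on the segment than $\widehat y$, i.e.\ strictly closer to the corresponding limit point, which is one of the points of $\widehat Q$. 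Writing out the barycentric coordinate of $\widehat{(r_a'r_b')^i\alpha}$ explicitly and checking it is strictly monotone in $i$ gives the separation.

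Concretely, here is the order I would carry this out. (1) State the rank-$2$ reduction lemma: for $x,y\in\Phi^+$ distinct, the normalized roots on $L(\widehat x,\widehat y)$ are exactly the $\widehat\gamma$ with $\gamma\in(\R x\oplus\R y)\cap\Phi^+$, and $L(\widehat x,\widehat y)\cap\widehat Q$ is computed inside $\R x\oplus\R y$ — this follows from Proposition~\ref{pp:max} and the remark after it. (2) Fix simple roots $\alpha,\beta$ for the maximal dihedral subgroup $W'$ with $-B(\alpha,\beta)=\cosh\phi$, enumerate the positive roots of $W'$ via the $c_n$-formulas, and write down the single real parameter (say the $\widehat\alpha$-barycentric coordinate $t_\gamma\in(0,1)$) locating $\widehat\gamma$ on the segment $[\widehat\alpha,\widehat\beta]$. (3) Identify the two parameter values $t_{\alpha_\infty},t_{\beta_\infty}$ of the isotropic points and show every $t_\gamma$ lies outside the open interval between them — this is part (i). (4) For part (ii), observe that $x\dom y$ forces $x$ and $y$ to be consecutive-direction roots on the same side, with $x$ strictly past $y$ toward the nearer isotropic point, because dominance in a dihedral system matches the index order on the $(r_\alpha r_\beta)^i$-chain (cf.\ Lemma~\ref{lemma:interlaced}); monotonicity of $i\mapsto t_{(r_\alpha r_\beta)^i\alpha}$ then yields the strict separation. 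The main obstacle I anticipate is bookkeeping in step (2)–(4): correctly pinning down which of $x_\infty,y_\infty$ is the relevant one, handling the degenerate case $B(x,y)=1$ uniformly, and making the monotonicity claim rigorous rather than asymptotic — the limit computation \eqref{eq:grad} only gives the endpoint behaviour, so I would need the exact closed form $t_{(r_\alpha r_\beta)^i\alpha}=\dfrac{c_{2i+1}|\alpha|}{c_{2i+1}|\alpha|+c_{2i}|\beta|}$ and a direct check that $c_{2i+1}/c_{2i}$ is strictly monotone in $i$ (which follows from $\coth$ being strictly decreasing), everything else being routine.
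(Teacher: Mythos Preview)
Your approach is correct but takes a different route from the paper. For part~(i) the paper gives a one-line argument using the imaginary cone: the closed segment $[x_\infty,y_\infty]$ lies in $\conv(E)=\overline{Z}$ by \eqref{eq:conv}, and on $\overline{Z}$ one has $B(\,\cdot\,,\,\cdot\,)\le 0$ by \eqref{eq:-ve}, so no root (which satisfies $B(\gamma,\gamma)=1>0$) can be normalized to a point of that segment. Your argument instead reduces to the maximal dihedral subgroup via Proposition~\ref{pp:max} and checks directly, via the monotonicity of $c_{n+1}/c_n=\cosh\theta+\coth(n\theta)\sinh\theta$, that every normalized root of $W'$ lies outside the open segment between the two isotropic points. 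For part~(ii) the paper does not invoke the maximal dihedral subgroup at all: it sets $\alpha=y$, $\beta=r_y x$ (so that $B(\alpha,\beta)=-B(x,y)\le -1$), computes the two isotropic directions in this basis, and reads off the sign of $B(\widehat\alpha,\,\cdot\,)$ along the segment, combining this with part~(i) to locate $\widehat x$ on the correct side of $\widehat y$. Your argument instead places $x$ and $y$ on the $(r_\alpha r_\beta)^i$-chain of $W'$ and uses that dominance in a rank-$2$ system matches the index order.

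Both work. The paper's proof of~(i) is shorter but imports the nontrivial identity $\conv(E)=\overline{Z}$ from \cite{DHR13}; your proof of~(i) is self-contained but requires you to enumerate \emph{all} positive roots of $W'$ (not just the $(r_\alpha r_\beta)^i\alpha$ and $(r_\beta r_\alpha)^i\beta$ listed in the text --- there are also $(r_\alpha r_\beta)^ir_\alpha\beta$, etc., giving the full family $c_{n+1}\alpha+c_n\beta$ and $c_n\alpha+c_{n+1}\beta$ for all $n\ge 0$). For~(ii), the paper's basis change is slicker, while your approach needs a short justification that $x\dom y$ in $\Phi^+$ implies $x,y$ lie on the same half of the $W'$-chain (e.g.\ because a root on the $\alpha$-side and a root on the $\beta$-side of a non-affine dihedral system are never in dominance with one another). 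You should also make sure your step~(4) handles the affine boundary case $B(\alpha,\beta)=-1$ for the canonical simple roots of $W'$, which can occur even when $B(x,y)>1$ is impossible there but is relevant when $B(x,y)=1$.
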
 

\begin{proof}
\noindent\rm (i):\quad If for a contradiction that there is $x\in \Phi$ with $\widehat{x}$ being on the line
line $L(\widehat{a}, \widehat{b})$ in between of $\widehat{a}_\infty$ and $\widehat{b}_\infty$. Then it follows from 
(\ref{eq:conv}) that $\widehat{x}\in \overline{Z}$, and consequently by (\ref{eq:-ve}) we must have
$B(\widehat{x}, \widehat{x})\leq 0$, contradicting the fact that $x\in \Phi$  (since $B(x, x)=1$ for any $x\in \Phi$).

\noindent\rm (ii):\quad Set $\alpha= y$, and $\beta=r_y x$. Observe that 
$$\alpha, \beta\in \Phi^+, \text{ and } B(\alpha, \beta)=-B(x, y)=-k\leq -1.$$ 
Furthermore, calculations similar to those at the beginning of this section yield that
$$L(\widehat{\alpha}, \widehat{\beta}) \cap\widehat{Q}=L(\widehat{x}, \widehat{y}) \cap\widehat{Q}=
\R((k\pm \sqrt{k^2-1})\alpha +\beta)\cap V_1.$$
Let $$\alpha_{\infty}=\R((k+\sqrt{k^2-1})\alpha+\beta) \cap V_1,$$
and
$$\alpha_{\infty}=\R((k-\sqrt{k^2-1})\alpha+\beta) \cap V_1.$$
If $\alpha_{\infty}\neq \beta_{\infty}$ (when $k\neq 1$), then it is readily seen that 
$\alpha_{\infty}$ separates $\widehat{\alpha}$ and $\beta_{\infty}$ on the line $L(\widehat{\alpha}, \widehat{\beta})$, and $\beta_{\infty}$  separates
$\widehat{\beta}$ and $\alpha_{\infty}$.
Then similar calculations as before yield that 
$B(\widehat{\alpha}, \alpha_{\infty})\geq 0$ and $B(\widehat{\alpha}, \beta_{\infty})\leq 0$.
Thus $B(\widehat{\alpha}, z)\geq 0$ for all $z$ in the closed segment ending in $\widehat{\alpha}$ and
$\alpha_{\infty}$; and $B(\widehat{\alpha}, z')\leq 0$ for all $z'$ in the closed segment ending in $\beta_{\infty}$ and
$\widehat{\beta}$. Since $x\in \Phi$, it follows from Part~(i) above that $\widehat{x}$ does not lie in the closed segment 
ending in $\alpha_{\infty}$ and $\beta_{\infty}$. Consequently, $\widehat{x}$ must be within the closed segment $\widehat{y}=\widehat{\alpha}$ and $\alpha_{\infty}$, and the desired result follows.

\end{proof}

\begin{thm}\label{lemma:intersection}
%$$\displaystyle \cap_{i=0}^\infty N_i=\{\ainf\}.$$
$$\bigcap_{i=0}^\infty N_i=\{\ainf\}.$$

\end{thm}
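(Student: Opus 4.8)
The plan is to prove the two inclusions of $\bigcap_i N_i=\{\ainf\}$ in turn. The inclusion $\supseteq$ is immediate: dominance is a partial order and $a_{i+1}\dom a_i$, so $a_j\dom a_i$ for all $j\geq i$, whence $a_j\in A_i$; since the $\widehat{a_j}$ are pairwise distinct with $\widehat{a_j}\to\ainf$ and $\ainf$ (being isotropic) is not any $\widehat{a_j}$, we get $\ainf\in N_i$ for every $i$.

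For $\subseteq$, fix $\eta\in\bigcap_i N_i$ and, for each $j$, choose $y_j\in A_j$ with $\|\widehat{y_j}-\eta\|<1/j$, so $\widehat{y_j}\to\eta$; we must show $\eta=\ainf$. Using $y_j\dom a_j$, the $W$-invariance of dominance (Lemma~\ref{lem:dom}), the identities $(r_ar_b)^k a_j=a_{j+k}$ and $(r_br_a)^k a_j=a_{j-k}$, and $\Phi=\Phi^+\sqcup\Phi^-$, one gets $(r_ar_b)^k y_j\in\Phi^+$ for all $k\geq 0$ and $(r_br_a)^k y_j\in\Phi^+$ for $0\leq k\leq j$. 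If for infinitely many $j$ some $(r_br_a)^m y_j$ lies in $\Phi^-$, then—passing to that subsequence and setting $k_j=\min\{k:(r_br_a)^{k+1}y_j\in\Phi^-\}\geq j$—we may write $y_j=(r_ar_b)^{k_j}c_j$ with $c_j\in\Phi^+$ and $(r_br_a)c_j\in\Phi^-$; as $r_a$ and $r_b$ fix $\Phi^+\setminus\{a\}$ and $\Phi^+\setminus\{b\}$ setwise, this forces $c_j\in\{a,\,r_ab\}$. Now Proposition~\ref{lem:periodic} gives $\widehat{(r_ar_b)^k a}\to\ainf$ and $\widehat{(r_ar_b)^k(r_ab)}\to\ainf$ (for the latter, $r_ar_b(r_ab)\notin\R(r_ab)$ since a root is never isotropic, so case~(2) of that proposition applies); because $c_j$ takes only these two values while $k_j\to\infty$, the convergence is uniform in $j$, so $\widehat{y_j}\to\ainf$ and $\eta=\ainf$. (This is essentially the mechanism behind Lemma~\ref{lemma:interlaced}.)

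The remaining possibility—$W_{a,b}y_j\subseteq\Phi^+$ for all large $j$—is the main obstacle. I would decompose $V=V'\oplus(V')^{\perp}$ $B$-orthogonally, where $V'=\R a\oplus\R b$; since $B$ restricted to $V'$ has signature $(1,1)$, the plane $V'$ contains two isotropic lines $\R e_{+}$ and $\R e_{-}$, which I normalize so that $\widehat{e_{+}}=\ainf$ and $\widehat{e_{-}}=\binf$, and write $y_j=\bar y_j+y_j^{\perp}$ accordingly ($\bar y_j\in V'$, $y_j^{\perp}\in(V')^{\perp}$). Because $W_{a,b}$ fixes $(V')^{\perp}$ pointwise while $\varphi(wy_j)>0$ for every $w\in W_{a,b}$, the $W_{a,b}$-orbit of $\bar y_j$ has $\varphi$ bounded below; inspecting the dihedral action on $V'$, this forces $\bar y_j=\alpha_j e_{+}+\beta_j e_{-}$ with $\alpha_j,\beta_j\geq 0$, and then $B(\bar y_j,a_j)=B(y_j,a_j)\geq 1$ controls the growth of $\alpha_j$ and $\beta_j$.

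To conclude, one first shows that the $e_{-}$-coordinate of $\eta$ vanishes: otherwise a compact neighbourhood of $\eta$ avoids $(V')^{\perp}$, and on such a set the maps $(r_br_a)^m\cdot$ converge uniformly (as $m\to\infty$) to the constant $\binf$, so applying $(r_br_a)^j\cdot$ to $y_j$—which lands in $A_0$—would put $\binf$ in $N_0$, contradicting $B(\binf,a)<0$ (which, together with $B(x,a)\geq 1$ for $x\dom a$, gives $\binf\notin N_0$; this also shows $\eta\neq\binf$). A further application of $(r_br_a)^m\cdot$ reduces the problem to showing that $\bigcap_i N_i$ contains no point of $(V')^{\perp}$—the hardest step—which I expect to follow by combining $B(\eta,\ainf)\leq 0$ and $B(\eta,\binf)\leq 0$ (Lemma~\ref{lem:-ve}), the separation statement of Proposition~\ref{pp:sep}(ii) applied to $y_j\dom a_j$, and a careful accounting of the $V'$- and $(V')^{\perp}$-components of the approximating roots. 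With that in hand, $\eta=\ainf$, completing the proof.
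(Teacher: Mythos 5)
Your handling of the case where some $(r_br_a)^m y_j$ falls into $\Phi^-$ is essentially sound and reproduces the mechanism of Lemma~\ref{lemma:interlaced}; one correction: the claim $c_j\in\{a,\,r_ab\}$ relies on $r_a$ permuting $\Phi^+\setminus\{a\}$, which holds only for simple $a$, whereas here $a,b$ are arbitrary positive roots with $B(a,b)<-1$. You should instead observe that $c_j$ ranges over the finite set $\{c\in\Phi^+ \mid (r_br_a)c\in\Phi^-\}$, each element of which automatically satisfies the hypotheses of case (2) of Proposition~\ref{lem:periodic} (if $B(a,c)=B(b,c)=0$ then $(r_br_a)c=c\in\Phi^+$, and $\widehat{c}$ cannot be isotropic); the uniformity argument then goes through. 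Likewise your elimination of a nonzero $e_-$-coordinate of $\eta$, via $(r_br_a)^j A_j\subseteq A_0$ and $B(\binf,a)<0$, is a legitimate linear-algebra version of what the paper does with the orbit $(r_br_a)^i\cdot\yinf$.

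The genuine gap is the case you yourself label ``the hardest step'': ruling out $\eta\in (V')^{\perp}$, i.e.\ $B(\eta,a)=B(\eta,b)=0$. This is precisely the situation in which all the dynamics you have set up degenerate: such an $\eta$ is fixed by the $\cdot$ action of every element of $W_{a,b}$, so applying $(r_ar_b)^m$ or $(r_br_a)^m$ moves nothing, and Lemma~\ref{lem:-ve} together with $B(\eta,a_j)\geq 0$ is perfectly consistent with $B(\eta,a_j)=0$ for all $j$. ``Combining Lemma~\ref{lem:-ve}, Proposition~\ref{pp:sep}(ii) and a careful accounting of components'' is a statement of intent, not an argument, and this is exactly where the paper's proof does its real work. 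Having shown $B(\ainf,\yinf)=0$, so that $L(\ainf,\yinf)\subset\Qhat$ and the conic $P_\infty\cap\Qhat$ degenerates into two lines meeting at $\yinf$, the paper passes to the planes $P_j$ spanned by $\ainf$, $\binf$ and the actual normalized roots $\widehat{y_j}$ for large finite $j$: there $P_j\cap\Qhat$ is a pair of intersecting lines or a hyperbola, and Proposition~\ref{pp:sep} confines $\widehat{y_j}$ to a region in which, for $i$ large enough, one obtains $a_i\dom y_j\dom a_j$; Lemma~\ref{lemma:interlaced} then forces $\yinf=\ainf$, a contradiction. That passage from the limiting plane to the finite-stage planes, and the use of the separation statement to trap $\widehat{y_j}$ in a dominance interval between $a_j$ and some $a_i$, is the missing idea; without it your proof is incomplete at its decisive point.
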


\begin{proof}
First, note that since $\ainf\in N_i$ for all $i$,  the intersection contains $\ainf$. 
Suppose, for a contradiction, that there exists $\yinf\in N_i$ for all $i$ with
$\yinf\neq\ainf$. Choose a sequence $y_i\in\Phi^+$  with $\hat{y_i}\to\yinf$ and $y_i\in A_i$.
In particular, we may assume that $y_i\notin \R a \oplus \R b$ for each $i\in \N$. 

Note that we must have $B(\ainf,\yinf)=0$. To see this note that 
for $i\ge j$ we have
%$B(a_j,y_i)\ge 1$,  
$y_i\dom a_j$ and
therefore
$B(\hat{a}_j,\hat{y}_i)> 0$ and
\begin{align*}
B(\ainf,\yinf)=\lim_{j\to\infty}\lim_{i\to\infty}B(\hat{a}_j,\hat{y}_i)\ge 0.
\end{align*}
But since $\ainf,\yinf\in E$, it follows from Lemma \ref{lem:-ve} that $B(\ainf,\yinf)\le 0$, %, by \cite[Theorem 2.2]{DHR13}.
and consequently  $B(\ainf,\yinf)=0$. 

It follows that $L(\ainf,\yinf)$, the line containing $\ainf$ and $\yinf$, is a subset of $\Qhat$.

For each $j\in \N\cup\{\infty\}$, denote by $P_j$ the affine 2-plane in $V$ containing $\ainf, \binf, \hat{y_j}$.
Then the intersection $P_j\cap\Qhat$ is the zero set (isotropic set) of the bilinear form $B(\,,\,)$ on an affine 2-plane and therefore
must be a conic section. Since $P_\infty \cap \Qhat$ contains both the line $L(\ainf, \yinf)$ and the point $\binf$; moreover, 
since $\binf \notin L(\ainf, \yinf)$, it follows that $P_{\infty}\cap \Qhat$ consists of two distinct lines. We have two possibilities 
to consider.

(1)\quad If the two lines in $P_{\infty}\cap \Qhat$ were parallel.
By looking at $\{(r_br_a)^i\cdot\yinf \mid i\in\N\}$ one gets a contradiction to $E$ being compact. 
This is illustrated in Figure \ref{figure:1}.
\begin{figure}[htb]
%\iffalse
\begin{tikzpicture}
%\draw[help lines](0,-3)grid(10,3);

% two horizontal lines for \Qhat
\draw [thick] (0,1)--(10,1);
\draw [thick] (0,-1)--(10,-1);
\draw [thick, dashed] (-1,1)--(0,1);
\draw [thick, dashed] (-1,-1)--(0,-1);
\draw [thick, dashed] (10,1)--(11,1);
\draw [thick, dashed] (10,-1)--(11,-1);

\draw [gray] (1,-3)--(1,3); % vetical line containing a,b
\node [above left] at (1,1) {$\ainf$};
\draw[fill](1,1)circle[radius=0.1];
\node [below left] at (1,-1) {$\binf$};
\draw[fill](1,-1)circle[radius=0.1];

\draw [gray] (1,3)--(5,-1); % line contining a and yinf
\node [left] at (1,3) {$\hat a$};
\draw[gray,fill=gray](1,3)circle[radius=0.1];

\draw [gray] (1,-3)--(9,1); % line containing b and r_a.y
\node [left] at (1,-3) {$\hat b$};
\draw[gray,fill=gray](1,-3)circle[radius=0.1];

\node [above right] at (3,1) {$\yinf $};
\draw[fill](3,1)circle[radius=0.1];

\node [below right] at (5,-1) {$r_a\cdot \yinf $};
\draw[fill](5,-1)circle[radius=0.1];

\node [above right] at (9,1) {$(r_br_a)\cdot \yinf $};
\draw[fill](9,1)circle[radius=0.1];

\end{tikzpicture}
\caption{\label{figure:1}
A configuration of $\Qhat\cap P_\infty$ that would contradict the compactness of $E$.
If $\Qhat\cap P_\infty$ consisted of two parallel lines, the sequence of points
$(r_br_a)^i\cdot\yinf\in E$ would not converge.
}
%\fi
\end{figure}
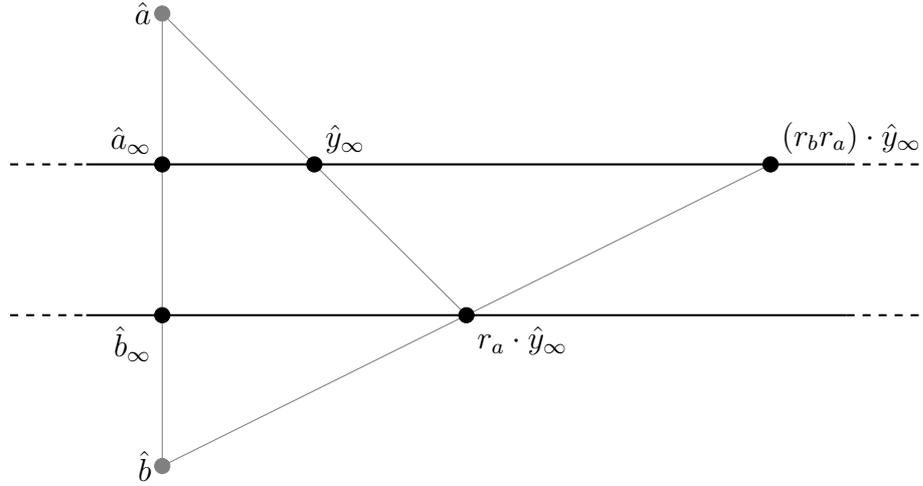

(2)\quad The two lines in $P_{\infty}\cap \Qhat$ intersect. Note first that if $\yinf$ is not the point of intersection 
of these two lines, then $B(\yinf, \binf)\neq 0$. This then implies that $B(\yinf, a)$ and $B(\yinf, b)$ are not both zero.
Then as we see in Figure~\ref{figure:2}, Figure~\ref{figure:3}  and Figure~\ref{figure:4} below that the sequence of points $(r_b r_a)^i\cdot \yinf$ will not converge to either 
$\ainf$ or $\binf$ (in fact, the sequence will converge to  the point \textcolor{blue}{$x$} as  illustrated in Figure~\ref{figure:2}), contradicting Proposition~\ref{lem:periodic}. 
 %[Add diagrams here]
%fig1.tex
\begin{figure}
\resizebox{7cm}{!}{%
\begin{tikzpicture}
        \draw (0,6)--node[pos=0.09,left]{$\hat{a}$}node [pos=0.93,left]{$\hat{b}$}(0,-6);
 %       \draw [color=red](-3,1) --node [color=black,pos=0.22,below]{$\hat{a}_{\infty}$}node [color=black,pos=0.39,below]{$\hat{y}_{\infty}$}node [color=black,pos=0.55,above]{$R_{b}r_{a}\cdot\hat{y}_{\infty}$}(9,-1);
         \draw [color=red](-3,1) --node [color=black,pos=0.22,below]{$\hat{a}_{\infty}$}node [color=black,pos=0.39,below]{$\hat{y}_{\infty}$}node [color=black,pos=0.50,above]{$(r_{b}r_{a})\cdot\hat{y}_{\infty}$}(9,-1);

        \draw [color=red](-3,-3) --node [color=black,pos=0.22,above]{$\hat{b}_{\infty}$}node [color=black,pos=0.395,above]{$r_{a}\cdot\hat{y}_{\infty}$}(9,1);    
        \draw [color=blue](0,5) -- (2.4,-1.2);
        \draw [color=blue](0,5) -- (3.6,-0.8);
        \draw [color=blue](3.11538, -0.0192308) -- (0,-5.16132);
        \draw [color=blue](4.1, -0.168) --node[color=black,pos=0,anchor=south west]{$(r_{b}r_{a})^{2}\cdot\hat{y}_{\infty}$} node[color=black,pos=0.15,anchor=north west]{$(r_{a}r_{b}r_{a})\cdot\hat{y}_{\infty}$}(0,-5.16132);
        \draw [dotted,color=blue](4.3,-0.333)--node[pos=1,below,color=blue]{$x$}(5,-0.3333);
\end{tikzpicture}
}
%\caption{
%FIG1
\caption{\label{figure:2}
}
\end{figure}

%%%%%%%%%%%%%%%%%%%%%%%%%%%%%%%%%%%%%
%fig2.tex
\begin{figure}[htb]
%\iffalse
\resizebox{8cm}{!}{%
\begin{tikzpicture}
\tikzset{%
dotstyle/.style={radius=0.05,fill=black} % 
}

% name some points
\coordinate (A) at (1,3.5);
\coordinate (B) at (1,-3.5);
\coordinate (AI) at (1,0.9);
\coordinate (BI) at (1,-0.9);

\draw (1,-5) -- (1,5); % vertical line on left

% draw Q lines
\def\Qangle{15}
\draw [name path= AIline, color=red] (AI) -- +(-\Qangle:10)  (AI) -- +(180-\Qangle:1);
\draw [name path= BIline, color=red] (BI) -- +(\Qangle:10) (BI) -- +(\Qangle-180:1);

% lines emanating from  A and B
\path [name path = Alinetop] (A) -- +(-29:15);
\draw [name intersections={of= Alinetop and AIline, by=yi}] (A) -- (yi);
\draw [name path = Blower, name intersections={of= Alinetop and BIline, by=ayi}] (B) -- (ayi);
\draw [name path= Amiddle, name intersections={of=Blower and AIline, by =bayi}] (A) -- (bayi);
\draw [name path= Bupper,name intersections={of=Amiddle and BIline, by = abayi}] (B) -- (abayi);
\draw [name intersections={of=Bupper and AIline, by=babayi}] (A) -- (babayi);

% label points
\def\dotsize{0.05}
\draw [fill=black] (yi) circle [radius=\dotsize];
\node [below] at (yi) {$\yinf$};
\draw [dotstyle] (ayi) circle;
\node [above right] at (ayi) {\small $r_a\!\cdot\!\yinf$};
\draw [dotstyle] (bayi) circle;
\node [below right] at (bayi) {\small $r_br_a\!\cdot\!\yinf$}; 
\draw [dotstyle] (abayi) circle;
\draw [dotstyle] (babayi) circle;

% label points on vertical line
\node [below left] at (AI) {$\ainf$};
\draw[gray,fill=gray] (AI) circle[radius=0.05];
\node [above left] at (BI) {$\binf$};
\draw[gray,fill=gray] (BI) circle[radius=0.05];
\node [left] at (A) {$\ahat$};
\draw[gray,fill=gray] (A) circle[radius=0.05];
\node [left] at (B) {$\bhat$};
\draw[gray,fill=gray] (B) circle[radius=0.05];

\end{tikzpicture}
} %\resize
%\caption{
%FIG 2
\caption{\label{figure:3}
}
%\fi
\end{figure}
%%%%%%%%%%%%%%%%%%%%%%%%%%%%%%%%%%%% fig2.tex

%%%%%%%%%%%%%%%%%%%%%%%%%%%%%%%%%%%%%%
%fig3.tex

\begin{figure}[htb]
%\iffalse
\resizebox{8cm}{!}{%
\begin{tikzpicture}
\tikzset{%
dotstyle/.style={radius=0.05,fill=black} % 
}

% name some points
\coordinate (A) at (1,3.5);
\coordinate (B) at (1,-3.5);
\coordinate (AI) at (1,0.9);
\coordinate (BI) at (1,-0.9);

\draw (1,-5) -- (1,5); % vertical line on left

% draw Q lines
\def\Qangle{15}
\draw [name path= AIline, color=red] (AI) -- +(-\Qangle:5)  (AI) -- +(180-\Qangle:7);
\draw [name path= BIline, color=red] (BI) -- +(\Qangle:5) (BI) -- +(\Qangle-180:7);

% lines emanating from  A and B
\draw [name path = Blower] (B) -- +(180-35:9);
\draw [name path=Aupper,name intersections={of= Blower and BIline, by=abayi}] (A) -- (abayi);
\draw [name path = Bupper, name intersections={of = Aupper and AIline, by=bayi}] (B) -- (bayi);
\draw [name path= Alower, name intersections={of=Bupper and BIline, by =ayi}] (A) -- (ayi);
\path [name intersections={of = Alower and AIline, by=yi}]; % just for naming pt

% label points
\draw [dotstyle] (abayi) circle;
\draw [dotstyle]  (bayi) circle;
\draw [dotstyle]  (ayi) circle;
\draw [dotstyle]  (yi) circle;
\node [above] at (yi) {\small $\yinf$};
\node [below] at (abayi) {\small $(r_ar_br_a)\!\cdot\!\yinf$}; 

%\draw [dotstyle] (abayi) circle;
%\draw [dotstyle] (babayi) circle;

% label points on vertical line
\node [above right] at (AI) {$\ainf$};
\draw[gray,fill=gray] (AI) circle[radius=0.05];
\node [below right] at (BI) {$\binf$};
\draw[gray,fill=gray] (BI) circle[radius=0.05];
\node [right] at (A) {$\ahat$};
\draw[gray,fill=gray] (A) circle[radius=0.05];
\node [right] at (B) {$\bhat$};
\draw[gray,fill=gray] (B) circle[radius=0.05];

\end{tikzpicture}
} %resizebox
%\caption{
%FIG 3
\caption{\label{figure:4}
}
%\fi
\end{figure}

%%%%%%%%%%%%%%%%%%%%%%%%%%%%%%%% fig3.tx

Thus $\yinf$ must be the point of intersection of the two lines making up $P_{\infty}\cap \Qhat$. Consequently, for $j$ sufficiently
large, $P_j\cap\Qhat$ must be either a pair of intersecting lines or two hyperbole. 

Case (2a)\quad For $j$ sufficiently large, $P_j\cap \Qhat$ consists of two intersecting lines as in Figure\ref{figure:5} below:

%[Add diagram here]
%%%%%%%%%%%%%%%%%%%%%%%%%%%%%%%%%%%%
%fig4.tex

\begin{figure}[htb]
%\iffalse
\resizebox{8cm}{!}{%
\begin{tikzpicture}
\tikzset{%
dotstyle/.style={radius=0.05,fill=black} % 
}

% name some points
\coordinate (A) at (1,3.5);
\coordinate (B) at (1,-3.5);
\coordinate (AI) at (1,0.9);
\coordinate (BI) at (1,-0.9);

% draw Q lines
\def\Qangle{15}
\draw [name path= AIline, color=red] (AI) -- +(-\Qangle:8)  (AI) -- +(180-\Qangle:1);
\draw [name path= BIline, color=red] (BI) -- +(\Qangle:8) (BI) -- +(\Qangle-180:1);
\path [name intersections={of=AIline and BIline, by=P}];

%region labels
%\node at (P |- 0,-1) {$\hat{y}_{j,5} $}; % directly below P
%\node at ($ (P) + (2,0)   $) {$\hat{y}_{j,3} $};  % right of P
%\node at  ($ (P) - (2,0) $)  {$\hat{y}_{j,2} $}; %left of P
%\node at  ($ (P) - (4.5,0) $)  {$\hat{y}_{j,1} $} ++(0,-2) node {$\hat{y}_{j,4} $}; %left of P

%shading/pattern of upper region
% first some non-drawn construction lines
\path [name path=BIlineExt, color=green] (BI) -- +(\Qangle:9);
\path [name path=AIlineExt, color=blue] (AI) -- +(180-\Qangle:1.5);
\path [name path=rightBorder, color=green] (8.7,5) -- ++(0,-5);
\path [name path=topBorder, color=blue] (-1,4) -- (9.5,4);
\path [name path=leftBorder, color=green] (0,0) -- (0,5);
\path [name intersections={of=topBorder and rightBorder, by= TR}];
\path [name intersections={of=topBorder and leftBorder, by= TL}];
\path [name intersections={of=leftBorder and AIlineExt, by= BL}];
\path [name intersections={of=rightBorder and BIlineExt, by= BR}];

% now actually draw shaded region

%\draw (P) -- ($ (AI) ! -0.4 ! (P) $) -- ++(0,3) -- ++(9.5,0);
\path [pattern=north east lines, pattern color=gray] (P) -- (BR) -- (TR) -- (TL) -- (BL) -- (P);

\draw (1,-5) -- (1,5); % vertical line on left

% label points on vertical line
\node [below left] at (AI) {$\ainf$};
\draw[gray,fill=gray] (AI) circle[radius=0.05];
\node [above left] at (BI) {$\binf$};
\draw[gray,fill=gray] (BI) circle[radius=0.05];
\node [left, fill=white, rounded corners, inner sep=2pt] at ($ (A) + (-0.1,0) $) {$\ahat$};
\draw[gray,fill=gray] (A) circle[radius=0.05];
\node [left] at (B) {$\bhat$};
\draw[gray,fill=gray] (B) circle[radius=0.05];

\end{tikzpicture}
}%resizebox
%\caption{
%FIG 4
\caption{\label{figure:5}
}
%\fi
\end{figure}

%%%%%%%%%%%%%%%%%%%%%%%%%%%%%%%%%%% fig 4

Then $\widehat{y_j}$ must be in the shaded region. Otherwise, consider the line through $\widehat{a_j}$ and
$\widehat{y_j}$. On this line there is at least one point of $\Qhat$ in between $\widehat{a_j}$ and $\widehat{y_j}$, 
contradicting the assumption that $y_j\dom a_j$ by Proposition~\ref{pp:sep} above. But if $\widehat{y_j}$ is in the shaded region, by choosing $i$ large
enough, we see that on the line through $\widehat{a_i}$ and $\widehat{y_j}$ either the two normalized roots $\widehat{a_i}$
and $\widehat{y_j}$ are in between the two points of intersection of that line with $\Qhat$, or else $\widehat{a_i}$ separates 
$\widehat{y_j}$ and the two points of intersection of that line with $\Qhat$. By Proposition~\ref{pp:sep}, the former is impossible, and the latter implies 
that $a_i\dom y_j\dom a_j$, and Lemma~\ref{lemma:interlaced} then establishes that $\yinf=\ainf$, a contradiction to our assumption.

Case (2b)\quad For $j$ sufficiently large, $P_j\cap \Qhat$ consists of two hyperbole as in Figure~\ref{figure:6}

%[Add diagram here]
%%%%%%%%%%%%%%%%%%%%%%%%%%%%%%%%%%%%%%%%%%%
%fig5.tex

\begin{figure}[htb]
%\iffalse
\resizebox{8cm}{!}{%
\begin{tikzpicture}
\tikzset{%
dotstyle/.style={radius=0.05,fill=black} % 
}

% name some points
\coordinate (A) at (1,3.5);
\coordinate (B) at (1,-3.5);
\coordinate (AI) at (1,0.9);
\coordinate (BI) at (1,-0.9);

% draw Q lines, not actually drawn!
\def\Qangle{15}
\path [name path= AIline, color=red] (AI) -- +(-\Qangle:8)  (AI) -- +(180-\Qangle:1);
\path [name path= BIline, color=red] (BI) -- +(\Qangle:8) (BI) -- +(\Qangle-180:1);
\path [name intersections={of=AIline and BIline, by=P}];

\draw [name path=vert] (1,-5) -- (1,5); % vertical line on left

%curved lines
\draw [name path = upperCurve, color=red] ($ (P) + (-4,1.5)  $) .. controls (P) and (P) .. ($ (P) + (4,1.5)  $);
\draw [name path = lowerCurve, color=red] ($ (P) + (-4,-1.5)  $) .. controls (P) and (P) .. ($ (P) + (4,-1.5)  $);

\path [name intersections = {of = upperCurve and vert, by=AIb}];
\path [name intersections = {of = lowerCurve and vert, by=BIb}];

%shaded region
\path [name path = upperRegion, pattern=north east lines, pattern color=gray] ($ (P) + (-4,1.5)  $) .. controls (P) and (P) .. ($ (P) + (4,1.5)  $) 
                                                              -- ++(0,2) -- ($ (P) + (-4,1.5) + (0,2)$) -- cycle;

% label points on vertical line
\node [below left] at (AIb) {$\ainf$};
\draw[gray,fill=gray] (AIb) circle[radius=0.05];
\node [above left] at (BIb) {$\binf$};
\draw[gray,fill=gray] (BIb) circle[radius=0.05];
%\node [left, fill=white, rounded corners, inner sep=2pt] at ($ (A) + (-0.1,0) $) {$\ahat$};
%\draw[gray,fill=gray] (A) circle[radius=0.05];
%\node [left] at (B) {$\bhat$};
%\draw[gray,fill=gray] (B) circle[radius=0.05];

\end{tikzpicture}
} %resizebox
%\caption{
%FIG 5
\caption{\label{figure:6}
}
%\fi
\end{figure}

%%%%%%%%%%%%%%%%%%%%%%%%%%%%%%%%%%%%%%%%%%%%%

Then similar as in Case (2a), we get a contradiction as $\widehat{y_j}$ can not be in any region of this affine 2-plane.

\end{proof}

\begin{cor}  \label{cor:fundsys}
%% terminology is from Bourbaki, General Topology pt1, p21
The set $\{N_i\| i\in \N  \}$ forms a fundamental system of neighbourhoods of the point $\ainf\in E$.
\end{cor}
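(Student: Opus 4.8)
The plan is to deduce this corollary directly from the two properties of the family $\{N_i\}_{i\in\N}$ that have already been established, namely Lemma~\ref{lemma:neighbourhood} (each $N_i$ is a neighbourhood of $\ainf$ in $E$), the containment $N_{i+1}\subseteq N_i$, and Theorem~\ref{lemma:intersection} ($\bigcap_{i} N_i=\{\ainf\}$). First I would recall what it means for $\{N_i\}$ to be a fundamental system of neighbourhoods of $\ainf$ in the subspace topology on $E$: every $N_i$ is a neighbourhood of $\ainf$ (this is exactly Lemma~\ref{lemma:neighbourhood}), and for every open set $U\subseteq E$ with $\ainf\in U$ there exists some $i$ with $N_i\subseteq U$. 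So the only thing that needs proof is the second clause.

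For that clause, I would argue by contradiction using the compactness of $E$ (Proposition~\ref{pp:compact}) together with the fact that each $N_i$ is closed in $E$ (remarked just before Lemma~\ref{lemma:neighbourhood}). Suppose $U\subseteq E$ is open, $\ainf\in U$, but $N_i\not\subseteq U$ for every $i$. Then each set $N_i\setminus U = N_i\cap(E\setminus U)$ is a nonempty closed subset of the compact space $E$. Since $N_{i+1}\subseteq N_i$, these sets are nested and decreasing, hence they have the finite intersection property; by compactness of $E$ their total intersection is nonempty, so there exists a point $z\in\bigcap_i\big(N_i\setminus U\big) = \big(\bigcap_i N_i\big)\setminus U$. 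By Theorem~\ref{lemma:intersection} this forces $z=\ainf$, contradicting $z\notin U$ since $\ainf\in U$. Therefore some $N_i$ is contained in $U$, which completes the verification.

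I do not expect any serious obstacle here: the corollary is a soft topological consequence of the three facts already in hand, and the only subtlety is remembering to work consistently in the subspace topology on $E$ (rather than in $V$) and to invoke compactness of $E$ to make the nested-closed-sets argument go through. Concretely, the proof I would write is essentially the following.

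\begin{proof}
By Lemma~\ref{lemma:neighbourhood}, each $N_i$ is a neighbourhood of $\ainf$ in $E$. It remains to show that every open subset $U$ of $E$ containing $\ainf$ contains some $N_i$. Suppose not; then for each $i\in\N$ the set $N_i\setminus U$ is nonempty. Moreover each $N_i$ is closed in $E$, so $N_i\setminus U = N_i\cap(E\setminus U)$ is closed in $E$, and since $N_{i+1}\subseteq N_i$ the family $\{N_i\setminus U\mid i\in\N\}$ is a decreasing chain of nonempty closed subsets of $E$. As $E$ is compact by Proposition~\ref{pp:compact}, this family has nonempty intersection: there exists
$$z\in\bigcap_{i\in\N}(N_i\setminus U)=\Big(\bigcap_{i\in\N}N_i\Big)\setminus U.$$
By Theorem~\ref{lemma:intersection}, $\bigcap_{i\in\N}N_i=\{\ainf\}$, whence $z=\ainf$ and $\ainf\notin U$, contradicting the choice of $U$. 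Therefore some $N_i$ is contained in $U$, and $\{N_i\mid i\in\N\}$ is a fundamental system of neighbourhoods of $\ainf$ in $E$.
\end{proof}
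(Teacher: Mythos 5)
Your proof is correct and follows essentially the same route as the paper, which simply cites Lemma~\ref{lemma:neighbourhood} for the neighbourhood property and Theorem~\ref{lemma:intersection} for the second clause. You merely make explicit the nested-closed-sets/compactness argument (via Proposition~\ref{pp:compact} and the closedness of each $N_i$) that the paper leaves implicit in the phrase ``follows from Theorem~\ref{lemma:intersection}''.
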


\begin{proof}
That each $N_i$ is a neighbourhood of $\ainf$ is the statement of Lemma~\ref{lemma:neighbourhood}.
That each neighbourhood of $\ainf$ contains an $N_i$ follows from Theorem~\ref{lemma:intersection}. 
\end{proof}

The limit set $E$ is equipped with the induced topology from $V$.
Let $ W''\le W$ be any rank 2 reflection subgroup of $W$. 
Note that for a fixed Coxeter system any reflection subgroup has a unique Coxeter generating set.
There is a unique canonical generating set for $W''$. If $W''$ is infinite and non-affine, then the above argument provides a system of neighbourhoods around each of its limit points. Let $\cN$ denote the collection of all such sets $\Int N_i$ for all possible $i$ and $W''$. That is,
$$
\cN=\{\Int(N_i(\hat x)) \| i\in\N, \hat{x}\in E_2  \}
$$
where $E_2$ denotes the subset of $E$ consisting of all limit points of non-affine infinite dihedral reflection subgroups. It is shown in \cite[Theorem 3.1]{DHR13} that the the set of limit points coming from non-affine infinite dihedral reflection subgroups of $W$ is dense in $E$. Indeed, they show that $W\cdot\ainf$ is dense in $E$. We hope to prove that the set $\cN$ is a basis for the topology on $E$, but this remains to be a work in progress.

%%%%%%%%%%%%%%%%%%%%%%%%%%%%%%%%%%%%ABCD

%\nocite{*}
%
%\printbibliography

% FOLLOWING TWO LINES FOR USE WITH STD BIUBTEX
%\bibliography{limit_roots}
%\bibliographystyle{plain}

\end{document}